\numberwithin{equation}{section}
\newtheoremstyle{fancy1}{10pt}{10pt}{\itshape}{12pt}{\textsc\bgroup}{.\egroup}{8pt}{ }
\newtheoremstyle{fancy2}{10pt}{10pt}{}{12pt}{\itshape}{.}{8pt}{ }
\theoremstyle{fancy1}
\newtheorem{lem}[equation]{lemma}
\newtheorem{prop}[equation]{Proposition}
\newtheorem{theorem}[equation]{Theorem}
\newtheorem{main}{Theorem}
\newtheorem*{main*}{Theorem}
\newtheorem*{conj*}{Conjecture}
\newtheorem*{cor*}{corollary}
\newtheorem*{question*}{Question}
\newtheorem{conj}{Conjecture}
\newtheorem*{cor1}{Corollary 1}
\newtheorem*{problem*}{Problem}
\theoremstyle{fancy2}
\newtheorem{definition}[equation]{Definition}
\newtheorem*{rem*}{Remark}
\newtheorem*{rems*}{Remarks}
\newcommand{\cref}[1]{Corollary~\ref{#1}}
\newcommand{\lref}[1]{Lemma~\ref{#1}}
\newcommand{\pref}[1]{Proposition~\ref{#1}}
\newcommand{\tref}[1]{Theorem~\ref{#1}}
\newcommand{\sref}[1]{Section~\ref{#1}}
\newcommand{\pw}{\partial W}
\newcommand{\e}{\epsilon}
\newcommand{\ggm}{geometric graph manifold\ }
\newcommand{\lccs}{local connected components }
\newcommand{\ggms}{geometric graph manifolds\ }
\newcommand{\Sph}{\mathbb{S}}
\newcommand{\su}{L^2}
\newcommand{\D}{L}
\newcommand{\gencyl}{(\su\times\R^{n-2})/G}
\newcommand{\gencylrec}{\su\times\R^{n-2}}
\newcommand{\cyl}{(\su\times\R^{n-2})/G}
\newcommand{\R}{{\mathbb{R}}}
\newcommand{\Z}{{\mathbb{Z}}}
\newcommand{\N}{{\mathbb{N}}}
\def\con#1=#2(#3){#1 \equiv #2 \bmod{#3}}
\newcommand{\la}{\langle}
\newcommand{\ra}{\rangle}
\newcommand{\tr}{\ensuremath{\operatorname{tr}}}
\renewcommand{\Im}{\ensuremath{\operatorname{Im}}}
\newcommand{\im}{\ensuremath{\operatorname{Im}}}
\newcommand{\vol}{\ensuremath{\operatorname{vol}}}
 \DeclareMathOperator{\Iso}{Iso}
\DeclareMathOperator{\Scal}{Scal}
\newcommand{\spa}{\mbox{span}}
\begin{document}

\title{Manifolds with conullity at most two\\as graph manifolds} 

\author{Luis A. Florit}
\address{IMPA: Est. Dona Castorina 110, 22460-320, Rio de Janeiro,
Brazil}
\email{luis@impa.br}
\author{Wolfgang Ziller}
\address{University of Pennsylvania: Philadelphia, PA 19104, USA}
\email{wziller@math.upenn.edu}
\thanks{The first author was supported by CNPq-Brazil,
and the second author by a grant from the National Science
Foundation, by IMPA, and CAPES-Brazil}

\begin{abstract} 
We find necessary and sufficient conditions for a complete Riemannian
manifold $M^n$ of finite volume, whose curvature tensor has nullity at
least $n-2$, to be a \ggm\!\!. In the process, we show that Nomizu's
conjecture, well known to be false in general, is true for manifolds with
finite volume.
\end{abstract}
\maketitle

The nullity space $\Gamma$ of the curvature tensor $R$ of a Riemannian
manifold $M^n$ is defined for each $p\in M$ as $\Gamma(p)=\{X\in T_pM:
R(X,Y)=0\ \ \forall Y\in T_pM\}$, and its dimension $\mu(p)$ is called
the {\it nullity} of $M^n$ at $p$. It is well known that the existence of
points with positive nullity has strong geometric implications. For
example, on an open subset of $M^n$ where~$\mu$ is
constant, $\Gamma$ is an integrable distribution with totally geodesic
leaves. In addition, if~$M^n$ is complete, its leaves are also complete
on the open subset where $\mu$ is minimal; see e.g.~\cite{ma}.
Riemannian $n$-manifolds with conullity at most 2, i.e., $\mu\geq n-2$,
which we call {\it CN2 manifolds} for short, appear naturally and
frequently in several different contexts in Riemannian geometry, e.g.:

\begin{itemize}[leftmargin=0.6cm]
\item Gromov's 3-dimensional graph manifolds admit a complete CN2 metric
with nonpositive sectional curvature and finite volume whose set of flat
points consists of a disjoint union of flat totally geodesic tori
(\cite{g}). These were the first examples of Riemannian manifolds with
geometric rank one. Interestingly, any complete metric of nonpositive
curvature on such a graph manifold is necessarily CN2 and quite rigid, as
was shown in \cite{sc};

\item A Riemannian manifold is called semi-symmetric if at each point the
curvature tensor is orthogonally equivalent to the curvature tensor of
some symmetric space, which is allowed to depend on the point. CN2
manifolds are semi-symmetric since they have pointwise the curvature
tensor of an isometric product of a Euclidean space and a surface with
constant curvature. Conversely, Szab\'o showed in \cite{sz} that a
complete simply connected semi-symmetric space is isometric to a
Riemannian product $S\times N$, where $S$ is a symmetric space and $N$
is, on an open and dense subset, locally a product of CN2 manifolds;

\item Isometrically deformable submanifolds tend to have large nullity.
In particular, by the classic Beez-Killing theorem, any locally
deformable hypersurface in a space form has to be CN2. Yet, generically,
CN2 hypersurfaces are locally rigid, and the classification of the
deformable ones has been carried out a century ago in \cite{sbr,ca}; see
\cite{dft} for a modern version and further results. The corresponding
classification of locally deformable CN2 Euclidean submanifolds in
codimension two is considerably more involved, and was obtained only
recently in \cite{df} and \cite{ff};

\item A compact immersed submanifold $M^3\subset\R^5$ with nonnegative
sectional curvature not diffeomorphic to the 3-sphere $\Sph^3$ is
necessarily CN2, and either isometric to $(\Sph^2\times\R)/\Z$ for some
metric of nonnegative curvature on $\Sph^2$, or diffeomorphic to a lens
space $\Sph^3/\Z_p$; see \cite{fz}. In the case of lens spaces, the set
of points with vanishing curvature has to be nonempty with Hausdorff
dimension at least two. However, it is not known yet if they can be
isometrically immersed into $\R^5$;

\item I. M. Singer asked in \cite{si} whether a Riemannian manifold is
homogeneous if the curvature tensor at any two points is orthogonally
equivalent. The first counterexamples to this question were CN2
manifolds with constant scalar curvature, which clearly have this
property, and are typically not homogeneous; see \cite{se1,BKV}.

\end{itemize}

\medskip

The most trivial class of CN2 manifolds is given by cylinders
$\gencylrec$ with their natural product metrics, where $\su$ is
any (not necessarily complete) connected surface. More generally, we call
a {\it twisted cylinder} any quotient
$$
C^n=\gencyl,
$$
where $G\subset\Iso(\gencylrec)$ acts properly discontinuously and
freely. The natural quotient metric is clearly CN2, and we call $\su$ the
{\it generating surface} of $C^n$, and the images of the Euclidean factor
its {\it nullity leaves}. Observe that $C^n$ fails to be complete only
because $\su$ does not need to be. Yet, what is important for us is that
$C^n$ is foliated by complete, flat, totally geodesic, and locally
parallel leaves of codimension~$2$.

\medskip

Our first goal is to show that these are the basic
building blocks of complete CN2 manifolds with finite volume:

\begin{main}\label{m1}
Let $M^n$ be a complete CN2 manifold. Then each finite volume connected
component of the set of nonflat points of $M^n$ is globally isometric to
a twisted cylinder.
\end{main}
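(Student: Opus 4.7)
The plan is to analyze the nullity distribution $\Gamma$ on the given component $\Omega$ and show that it must in fact be parallel; the desired product structure on the universal cover of $\Omega$ then follows from a de~Rham-type splitting, and the conclusion follows by quotienting. First, at any nonflat point the nullity cannot equal $n$, so the nullity is constantly $n-2$ on $M^*:=\{p\in M : R_p\ne 0\}$, and on $\Omega\subset M^*$ the distribution $\Gamma$ has constant rank $n-2$. The classical results cited in the introduction make $\Gamma$ integrable with totally geodesic flat leaves; since $n-2$ is the minimum of $\mu$ on the open set $M^*$, each leaf is complete, hence isometric to a quotient of $\R^{n-2}$.

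The main tool will be the splitting tensor $S_T:\Gamma^\perp\to\Gamma^\perp$, $S_T V := -\pi_{\Gamma^\perp}\nabla_V T$, which measures the failure of $\Gamma$ (equivalently $\Gamma^\perp$) to be parallel. For $T\in\Gamma$ the Jacobi curvature operator $V\mapsto R(V,T)T$ on $\Gamma^\perp$ vanishes, and along every unit-speed geodesic $\gamma$ of a leaf the tensor $S:=S_{\gamma'}$ satisfies the matrix Riccati equation $S'=S^2$. Completeness of the leaves forces $S(t)$ to be defined for all real $t$, and since the explicit solution is $S(t)=S(0)(I-tS(0))^{-1}$, this means $I-tS(0)$ must be invertible for every $t$, i.e.\ $S(0)$ has no nonzero real eigenvalue. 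Pointwise, $S$ is therefore either zero, nilpotent, or has a pair of purely complex eigenvalues. The crucial third step uses finite volume: a dynamical/recurrence argument on $\Omega$ applied to the flow of appropriate nullity vector fields, together with Codazzi-type identities coming from the CN2 structure, should rule out the nilpotent and complex-eigenvalue cases and yield $S\equiv 0$ on all of $\Omega$.

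Once $S\equiv 0$, both $\Gamma$ and $\Gamma^\perp$ are parallel distributions on $\Omega$. A de~Rham-type argument---taking care that $\Omega$ itself need not be complete, but exploiting that its $(n-2)$-dimensional leaves are---shows that the Riemannian universal cover $\widetilde\Omega$ splits isometrically as a product $\su\times\R^{n-2}$, where $\su$ is a simply connected surface (obtained as the universal cover of a $\Gamma^\perp$-leaf through a basepoint) and $\R^{n-2}$ is the flat nullity factor (obtained from lifts of the complete nullity leaves). The deck transformation group $G=\pi_1(\Omega)\subset\Iso(\su\times\R^{n-2})$ then acts freely and properly discontinuously by isometries, exhibiting $\Omega=(\su\times\R^{n-2})/G$ as a twisted cylinder in the sense of the paper, with generating surface $\su$.

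The main obstacle is the third step: upgrading the Riccati rigidity (no nonzero real eigenvalue) to the pointwise rigidity $S\equiv 0$ by means of the finite-volume hypothesis. This hypothesis is essential---there are infinite-volume complete CN2 examples (such as cusped graph manifolds) where $S\not\equiv 0$---and the claim in the abstract that Nomizu's conjecture is also established here in the finite-volume case strongly suggests that this step and the finite-volume Nomizu conjecture are proved in tandem: a nontrivial splitting tensor would produce a local homogeneity structure incompatible with the global finite-volume geometry, and ruling this out should be the technical heart of the paper.
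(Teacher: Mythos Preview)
Your outline is essentially the paper's own strategy: introduce the splitting tensor, derive the Riccati equation $C'=C^2$ along nullity geodesics, use completeness to forbid nonzero real eigenvalues, then invoke finite volume via a recurrence argument to eliminate the complex-eigenvalue and nilpotent cases, and finally apply a semi-global de~Rham theorem (Proposition~\ref{derham}) to obtain the twisted-cylinder structure. The paper fills in your ``third step'' concretely: one chooses the unit field $T\in\Gamma$ orthogonal to $\ker C$, pushes a small compact set forward under its flow $\phi_t$, and uses the explicit formula $\det C(t)=\det C_0/(1-t\,\tr C_0+t^2\det C_0)$ together with the nondecreasing-volume/recurrence trick to kill the complex-eigenvalue case; the nilpotent case is then dispatched by a moving-frame computation showing $\Scal_M\equiv 0$, a contradiction.

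Two peripheral corrections. Your parenthetical example is off: cusped graph manifolds in the paper's sense already have parallel $\Gamma$, hence $S\equiv 0$; the genuine infinite-volume counterexamples are the locally irreducible curvature-homogeneous CN2 spaces of Sekigawa et al.\ cited in the introduction. And your closing speculation that the mechanism is a ``local homogeneity structure incompatible with finite volume'' is not how the argument runs---there is no appeal to homogeneity, only to the explicit Riccati solution and the weak recurrence forced by finite volume.
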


The hypothesis on the volume of $M^n$ is essential, since complete
locally irreducible Riemannian manifolds with constant conullity two
abound in any dimension; see \cite{se1}, \cite{BKV} and references
therein. These examples serve also as counterexamples to the Nomizu
conjecture in \cite{no}, which states that a complete locally irreducible
semi-symmetric space of dimension at least three must be locally
symmetric. However, \tref{m1} together with Theorem 4.4 in \cite{sz}
yield:

\begin{cor1}
Nomizu's conjecture is true for manifolds with finite volume.
\end{cor1}

For the 3-dimensional case, the fact that the set of nonflat points of a
finite volume CN2 manifold is locally reducible was proved in \cite{se2}
and \cite{sw} with a longer and more delicate proof; see also \cite{se3}
for the 4-dimensional case. Notice also that in dimension 3
the CN2 condition is equivalent to the assumption, called cvc(0) in
\cite{sw}, that every tangent vector is contained in a flat plane,
or to the condition that the Ricci endomorphism has eigenvalues
$(\lambda,\lambda,0)$.
Furthermore, in \cite{sb} it was shown that a complete 3-manifold with
(geometric) rank one is a twisted cylinder.

\medskip

Observe that we are free to change the metric in the interior of the
generating surfaces of the twisted cylinders in \tref{m1}, still
obtaining a complete CN2 manifold. Moreover, they are nowhere flat with
Gaussian curvature vanishing at their boundaries. Of course, these
boundaries can be quite complicated and irregular.

\medskip

In general it is very difficult to fully understand how the twisted
cylinders in \tref{m1} can be glued together through the set of flat
points in order to build a complete Riemannian manifold. An obvious way
of gluing them is through compact totally geodesic flat hypersurfaces.
Indeed, when the boundary of each generating surface $\su$ in the twisted
cylinder $C=\gencyl$ is a disjoint union of complete geodesics $\gamma_j$
along which the Gaussian curvature of $\su$ vanishes to infinity
order, the boundary of $C$ is a disjoint union of complete totally
geodesic flat hypersurfaces
$H_j=(\gamma_j\times\R^{n-2})/G_j\subset M^n$, where $G_j$ is the
subgroup of~$G$ preserving $\gamma_j$. We can now use each $H_j$ to
attach another finite volume twisted cylinder $C'$ to $C$ along $H_j$, as
long as $C'$ has a boundary component isometric to~$H_j$. Repeating and
iterating this procedure with each boundary component we construct a
complete CN2 manifold $M^n$. As we will see, the hypersurfaces $H_j$ have
to be compact if $M^n$ has finite volume. This motivates the
following concept of \ggm of dimension $n\geq 3$, which by
definition is endowed with a CN2 Riemannian metric:

\medskip

{\it Definition.}
A connected Riemannian manifold $M^n$ is called a {\it \ggm}
if $M^n$ is a locally finite disjoint union of twisted cylinders $C_i$
glued together through disjoint compact totally geodesic flat
hypersurfaces $H_\lambda$ of $M^n$. That is,
$$
M^n\setminus W= \bigsqcup_\lambda H_\lambda,
\ \ \ {\rm where}\ \ \ W:=\bigsqcup_i C_i.
$$
\begin{figure}[!ht]
\centering
\includegraphics[width=0.74\textwidth]{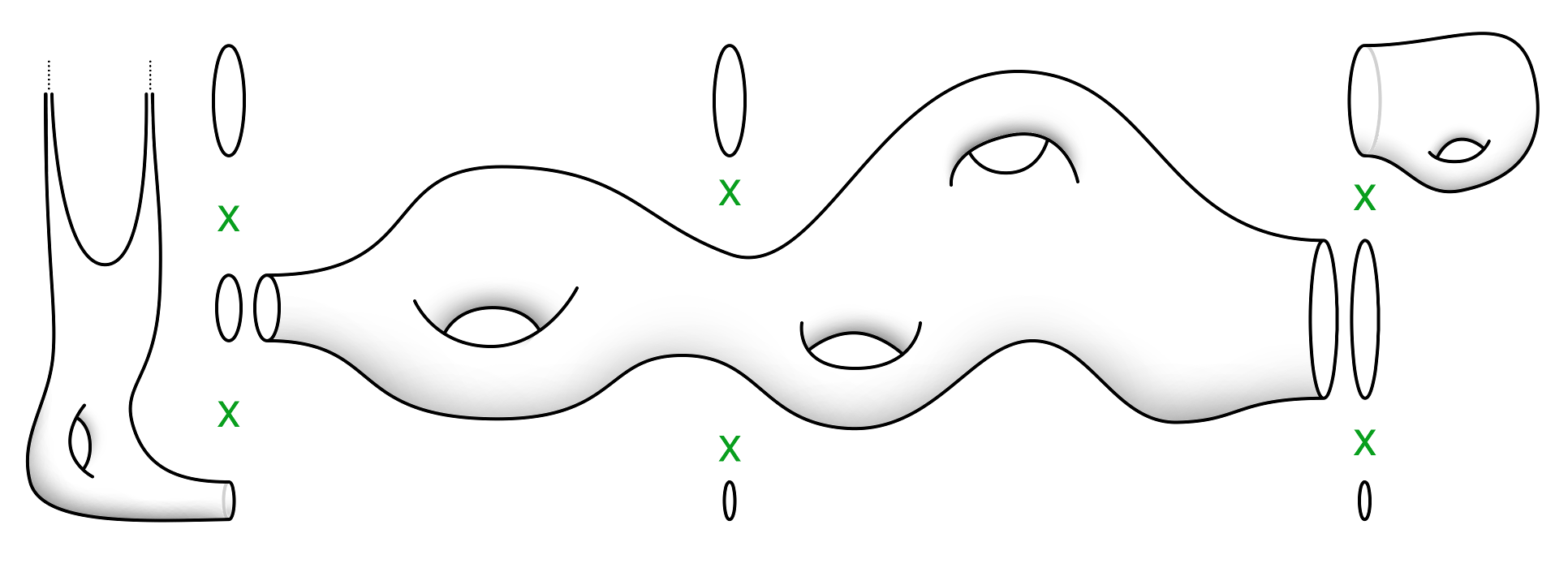} 
\caption{\small An irreducible 4-dimensional CN2 \ggm}
\centerline{\small with three cylinders and two (finite volume) ends}
\end{figure}
\bigskip

Here we allow the possibility that a hypersurface $H_\lambda$ is
one-sided, even when $M^n$ is orientable. We also assume, without loss of
generality, that the nullity leaves of two cylinders $C$ and $C'$, glued
along~$H_\lambda$, have distinct limits in $H_\lambda$. This implies in
particular that for each cylinder $C$, the Gauss curvature vanishes along
$\partial C$ to infinite order. Notice that the locally finiteness
condition is equivalent to the assumption that each $H_\lambda$ is a
common boundary component of two twisted cylinders $C_i$ and $C_j$, that
may even be globally the same, each lying at a local side of $H_\lambda$.

Observe that the complement of $W$ is contained in the set of flat points
of $M^n$, but we do not require that the generating surfaces of $C_i$
have nonvanishing Gaussian curvature. In particular the sectional
curvature of $C_i$, or equivalently its scalar curvature, can change
sign. More importantly, $W$ carries a well defined complete flat totally
geodesic parallel distribution of constant rank $n-2$ contained in the
nullity of $M^n$. Furthermore, $W$ is dense and locally finite in the
sense that it has a locally finite number of connected components (see
\sref{mainsec} for a precise definition). These two topological
properties will be crucial in what follows, so for convenience we say
that a dense locally finite set is {\it full}.

\hspace{0.5cm}

A natural way to try to see if a Riemannian manifold $M^n$ as in
\tref{m1} is indeed a \ggm is the following. \tref{m1} implies that on
the open set of nonflat points $V$ we have the well defined parallel
nullity distribution $\Gamma$ of rank $n-2$, as in~$W$ above. Now,
consider any open set $\hat V\supset V$ carrying a complete flat totally
geodesic distribution $\hat \Gamma$ with $\hat\Gamma|_V=\Gamma$, which we
call an {\it extension} of $V$. We will show that each connected
component of $\hat V$ is still a twisted cylinder, and call $\hat V$
{\it maximal} if it has no larger extension. Clearly, by definition $V$
always has a maximal extension, but it may not be unique. More
importantly, all extensions of $V$ may fail to be either dense, or
locally finite, or both; see Examples 2 and 3 in Section 1.

Our second main goal is to prove that all we need to ask in order for
$M^n$ as in \tref{m1} to be a \ggm is that some extension of $V$ is full:

\begin{main}\label{m2}
Let $M^n$ be a complete CN2 manifold with finite volume. Then $M^n$ is a
\ggm if and only if its set of nonflat points $V$
admits a full extension. In particular, if $V$ itself is full, then $M^n$
is a \ggm\!\!.
\end{main}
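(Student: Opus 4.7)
My plan is to handle the two directions separately, with the ``if'' direction being the substantive one.

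For the ``only if'' direction, I would take $\hat V := W = \bigsqcup_i C_i$ directly. On each twisted cylinder $C_i$ the Euclidean factor induces a complete flat totally geodesic parallel distribution of rank $n-2$, and these patch together to a distribution $\hat\Gamma$ on $W$. Since every $H_\lambda$ is flat by definition, all nonflat points of $M^n$ lie in $W$, and the uniqueness of the $(n-2)$-dimensional nullity at a nonflat CN2 point forces $\hat\Gamma|_V = \Gamma$. That $W$ is dense and locally finite is precisely the content of the graph-manifold definition, so $\hat V$ is full.

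For the ``if'' direction, suppose $V$ admits a full extension $\hat V$. I would first replace $\hat V$ by a maximal full extension via a Zorn argument, the delicate point being that local finiteness is preserved along chains because the component of any extension through a given $C_i$ is an increasing chain of cylinders containing $C_i$. By the result announced just before Theorem B, each connected component of the enlarged $\hat V$ is a twisted cylinder $C_i = (\Sigma_i \times \R^{n-2})/G_i$, and the family $\{C_i\}$ is locally finite.

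The central task is to analyze $F := M^n \setminus \hat V$. Fix $p\in F$. Local finiteness yields a neighborhood $U$ of $p$ meeting only finitely many cylinders $C_{i_1},\dots,C_{i_k}$, and density forces $U \setminus F$ to be contained in their union. Maximality is the decisive input: if the parallel flat rank-$(n-2)$ distribution of some $C_{i_\ell}$ admitted a smooth extension across its boundary at $p$, one would enlarge $\hat V$ to a strictly larger full extension, contradicting maximality. This forces the boundary geodesics of each generating surface $\Sigma_{i_\ell}$ reaching $p$ to be loci of infinite-order vanishing of the Gauss curvature, and hence each local boundary piece of $C_{i_\ell}$ at $p$ to be a smooth flat totally geodesic hypersurface of $M^n$. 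A parallel maximality argument rules out two adjacent cylinders sharing a common nullity-leaf limit at $p$, so the local pieces assemble globally into disjoint flat totally geodesic hypersurfaces $\{H_\lambda\}$ with $F = \bigsqcup_\lambda H_\lambda$, each $H_\lambda$ being the common boundary of exactly two (possibly equal) cylinders.

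It remains to upgrade each $H_\lambda$ from complete flat to compact. This is where finite volume enters: a one-sided collar of $H_\lambda$ inside an adjacent cylinder has volume bounded below by $\eps\,\vol(H_\lambda)$ for some $\eps > 0$, so $\vol(H_\lambda) \le \vol(M^n)/\eps < \infty$, and a complete flat Riemannian manifold of finite volume is compact by Bieberbach. The main obstacle, I expect, is the maximality-to-smoothness step above: translating the non-existence of a larger full extension into infinite-order vanishing of the Gauss curvature of each $\Sigma_i$ along its boundary geodesics carries the essential CN2 analytic content, and will require a careful examination of how the parallel flat distribution on $C_i$ can or cannot be continued across a boundary geodesic of $\Sigma_i$.
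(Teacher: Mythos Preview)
Your overall architecture is right: the ``only if'' direction is immediate from the definition, and for the ``if'' direction one passes to a maximal full extension $\hat V$, uses the de Rham--type result to see that each component of $\hat V$ is a twisted cylinder, and then must show that $F=M^n\setminus\hat V$ is a disjoint union of compact flat totally geodesic hypersurfaces. You also correctly identify maximality as the mechanism that obstructs extending $\Gamma$ continuously across $\partial\hat V$.

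The genuine gap is in the step you flag as ``maximality-to-smoothness''. You write that non-extendability ``forces the boundary geodesics of each generating surface $\Sigma_{i_\ell}$ reaching $p$ to be loci of infinite-order vanishing of the Gauss curvature, and hence each local boundary piece \dots\ to be a smooth flat totally geodesic hypersurface''. This presupposes exactly what must be proved. A priori $\partial\Sigma_i$ need not consist of geodesics at all; the paper explicitly warns that $\partial W_i$ ``may not even be a Jordan curve and hence may not consist of a union of continuous arcs'', and that there may not even be a continuous curve in $W_i$ ending at $p$. The implication you want runs in the wrong direction: if the boundary were irregular, the distribution would certainly fail to extend, so non-extension by itself gives no regularity. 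The infinite-order vanishing of the Gauss curvature is a \emph{consequence} of the hypersurface structure together with the fact that two cylinders with distinct limit nullity leaves are being glued; it is not a tool used to obtain that structure.

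What the paper actually does is quite different and more delicate. At each $p\in\partial W$ one introduces the set $\Gamma(p)$ of boundary nullity leaves (BNL's), the limits of $\Gamma$ along sequences in each local component $W_i$. A curvature-integral estimate (parallel transport around loops in a transversal surface $S$ is controlled by $\int|\Scal_M|$, which is small near $\partial W$) shows that each $W_i$ contributes a \emph{unique} BNL and that the angle between $\Gamma_{W_i}$ and $\Gamma_{W_j}$ is constant along $\partial W_i\cap\partial W_j$. Maximality then enters to show $\Gamma$ does not extend continuously to any neighborhood of $p$, which forces $\#\Gamma(p)\ge 2$. The decisive geometric step is a \emph{convexity} argument: because distinct BNL's at a boundary point prevent a geodesic between two points of $W_i$ from leaving $W_i$ (else one would produce a point with a unique BNL), each $W_i$ is locally convex, and the intersections $\partial W_i\cap\partial W_j$ are convex with parallel BNL families. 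Only then does one conclude that $\partial W$ is a union of smooth flat totally geodesic hypersurfaces. Your outline bypasses this entire BNL/convexity mechanism, and without it the argument does not go through. The compactness argument at the end is closer to what you sketch, though the paper also needs to treat separately the case where the subgroup preserving a boundary component acts trivially on the boundary geodesic $\gamma$, invoking the second cylinder with distinct BNL's to finish.
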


We point out that here we do not require a full extension $\hat V$ of
$V$ to be maximal, but clearly any maximal extension of $\hat V$ is also
full. We can for example introduce complicated sets of flat points in the
twisted cylinders, even as Cantor sets in the generating surfaces, but
these flat sets will be absorbed by a maximal full extension. As we will
show, any maximal full extension will satisfy the properties of $W$ in
the definition of \mbox{\ggm\!\!}, see \tref{mainfinite}. We expect that
the methods developed to prove this can be extended for distributions of
arbitrary rank.

\smallskip

The assumption of local finiteness in \tref{m2}
can be regarded as a mild regularity condition.
But we believe that even without
regularity conditions it should be possible to understand the gluing
between the twisted cylinders. We state:

\begin{conj}
If the set of nonflat points of a complete CN2 manifold with finite
volume admits a dense (not necessarily locally finite) extension, then
the complement of any maximal one is a disjoint union of compact totally
geodesic flat hypersurfaces, possibly accumulating (see Example~3 in
\sref{examples}).
\end{conj}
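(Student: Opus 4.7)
The plan is to work with a maximal dense extension obtained by applying Zorn's lemma to extensions of $V$ containing the given dense one $\hat V_0$; any such Zorn-maximal object $\hat V$ is automatically dense, and this is the sense in which I take ``any maximal extension''. The first step is to show that each connected component of $\hat V$ is globally isometric to a twisted cylinder $(L\times\R^{n-2})/G$, adapting the proof of \tref{m1}. The key observation is that this proof uses only the existence of a complete flat totally geodesic parallel distribution of rank $n-2$, not the nonflatness of the ambient points; so the same structure theorem applies on $\hat V$, where the generating surface $L$ is now allowed to contain flat (even Cantor-type) interior subsets that were absorbed into the extension.

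Next, set $F:=M^n\setminus\hat V$, which is closed and, by density, nowhere dense. The heart of the proof is a local analysis at an arbitrary point $p\in F$. In a small convex normal ball $B$ around $p$, the open set $B\cap\hat V$ is a union of pieces of twisted cylinders, each carrying its own parallel rank-$(n-2)$ distribution. Maximality of $\hat V$ implies that if the limits of all these distributions on approach to $p$ agreed across a whole neighborhood of $p$, one could extend $\hat\Gamma$ across $p$, contradicting maximality. I would therefore argue that the obstruction to extension forces $F\cap B$ to have codimension one at $p$. Combined with the explicit boundary structure of each $C_i=(L_i\times\R^{n-2})/G_i$, whose boundary is a disjoint union of flat totally geodesic hypersurfaces $(\gamma\times\R^{n-2})/G_\gamma$ along boundary geodesics of $L_i$ where the Gaussian curvature vanishes to infinite order, this shows that each connected component of $F$ is a complete totally geodesic flat hypersurface of $M^n$.

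For compactness of such a hypersurface $H$, I would use the finite volume of $M^n$. Since $H$ is a one-sided boundary component of at least one twisted cylinder $C$ of finite volume, and the nullity leaves of $C$ accumulating to $H$ are complete flat $(n-2)$-manifolds isometric to the fibres of the natural Riemannian submersion $H\to H/G_H$ along the Euclidean factor, a Fubini-type computation inside $C$ forces a finite $(n-2)$-volume for these leaves; by Bieberbach the leaves are then compact, and hence so is $H=(\gamma\times\R^{n-2})/G_H$. The same argument simultaneously shows that $H$ is embedded (possibly one-sided) in $M^n$.

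The hardest step, and the reason the statement is only a conjecture, is the local analysis above in the absence of local finiteness. When infinitely many cylinders $C_i$ accumulate at $p\in F$ and their interfaces form a Cantor-like family of hypersurfaces in every neighborhood of $p$, one must show that each connected component of $F$ is still a single smooth hypersurface rather than a wild totally disconnected obstruction, and that the limit distributions admit a coherent description from each side. The expected mechanism is that maximality forces every incompatibility between limit distributions to be a pure codimension-one jump, which, together with the totally geodesic condition and the rigidity of complete flat distributions in CN2 geometry, assembles into genuine flat hypersurfaces. Making this rigorous in full generality, without the regularity provided by local finiteness exploited in \tref{m2}, is the main obstacle.
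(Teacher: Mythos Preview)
This statement is Conjecture~1 in the paper, and the paper does not prove it; it is explicitly left open. So there is no proof to compare against. Your outline is a reasonable sketch of how one might try to push the strategy behind \tref{m2} into the non--locally-finite setting, and you honestly flag at the end that the local analysis without local finiteness is the essential obstacle.

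Two of your intermediate steps, however, already assume what is in fact part of the open problem. First, you assert that the boundary of each twisted cylinder $C_i=(L_i\times\R^{n-2})/G_i$ is a disjoint union of flat totally geodesic hypersurfaces along complete boundary geodesics of $L_i$. This is exactly what the chain of lemmas in \sref{mainsec} (culminating in \lref{locconv} and \lref{int1}) establishes \emph{under} local finiteness; those arguments use the finitely many local components $W_1,\dots,W_m$ at a boundary point in an essential way, and the paper warns elsewhere that without such hypotheses the boundary of $L_i$ can be quite irregular. Second, even granting hypersurface boundaries for every $C_i$, your passage from ``each $\partial C_i$ is a union of hypersurfaces'' to ``each component of $F$ is a hypersurface'' misses points of $F$ lying in no $\overline{C_i}$ at all: in Example~3 the middle torus $T^2$ is disjoint from the closure of every component of any extension of $V$, yet must appear in the conclusion as one of the (accumulating) hypersurfaces. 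The paper's closing Remark and Example~4 isolate the concrete configuration one must exclude---a concave local component whose complementary wedge is densely filled by infinitely many shrinking cylinders with drifting nullity directions---and your ``pure codimension-one jump'' heuristic does not yet address it. A minor side comment: the twisted-cylinder structure on each component of $\hat V$ follows directly from \pref{derham}, since $\hat\Gamma$ is parallel by hypothesis; no adaptation of the vanishing-of-$C$ argument in \tref{product} is needed.
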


Certainly more difficult, we can ask what happens if we remove
all hypothesis on $V$. In particular, we do not know if the following
is true:

\begin{question*}
Does the set $V$ of nonflat points of a complete CN2 manifold with finite
volume admit a maximal (not necessarily dense or locally finite)
extension $\hat V$ such that $\partial \hat V$ is a union of flat totally
geodesic hypersurfaces, each of which has complete totally geodesic
boundary (if nonempty)? (See Example 2 in \sref{examples}).
\end{question*}

On the other hand, in the case of nonnegative or nonpositive curvature we
believe that no extra assumptions are needed:

\begin{conj}
Every compact CN2 manifold with nonnegative or nonpositive
scalar curvature and finite volume is a \ggm\!\!.
\end{conj}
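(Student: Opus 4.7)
The plan is to apply \tref{m2}: since $M^n$ is compact, hence of finite volume, it suffices to exhibit a full extension of the set of nonflat points $V$. So the goal becomes producing a dense and locally finite open set $\hat V\supset V$ carrying a complete flat totally geodesic distribution of rank $n-2$ that restricts to the nullity $\Gamma$ on $V$.

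First I would recall that the curvature tensor of a CN2 manifold is, at every point, orthogonally equivalent to that of a Riemannian product of $\R^{n-2}$ with a surface; consequently the scalar curvature equals $2K$, where $K$ is the Gauss curvature of the surface factor. The hypothesis $\Scal\geq 0$ (resp.\ $\Scal\leq 0$) therefore translates into $K\geq 0$ (resp.\ $K\leq 0$) everywhere. By \tref{m1}, each connected component of $V$ is a twisted cylinder $C_i=(L_i\times\R^{n-2})/G_i$, and the signed curvature forces the Gauss curvature of the generating surface $L_i$ to be of a single strict sign on its interior, vanishing only as one approaches $\partial L_i$.

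Next I would analyze the behavior of the parallel nullity $\Gamma$ across $\partial V$. For any sequence $p_n\to p\in\partial V$ with $p_n\in V$, the $(n-2)$-planes $\Gamma(p_n)\subset T_{p_n}M$ subconverge to some $(n-2)$-plane in $T_pM$, which by total geodesy is necessarily tangent to a complete flat leaf through $p$. The signed curvature hypothesis, combined with the standard convexity of the set of flat points along geodesics in the nonpositively curved case, and dually with Toponogov or splitting arguments in the nonnegatively curved case, should imply that this limit is independent of the approaching sequence. This produces a well-defined extension of $\Gamma$ to the closure $\overline V$, and one expects $\partial V$ to be locally a flat totally geodesic hypersurface transverse to the extended distribution, so that each maximal extension $\hat V$ of $V$ differs from $M^n$ only by a union of such hypersurfaces.

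The remaining task is to show that (a) any open flat component of $M\setminus V$ can be absorbed into a maximal extension $\hat V$ whose distribution smoothly matches the ones inherited from neighboring cylinders, and (b) the resulting $\hat V$ is locally finite. For (b) I would exploit compactness: the signed Gauss curvature provides a local volume lower bound for each twisted cylinder preventing them from becoming arbitrarily thin near their flat boundaries, and standard packing then permits only finitely many components in any compact region. The main obstacle is (a): controlling how several twisted cylinders abut a common flat open piece and verifying that their limiting nullities are all parallel to a single complete flat rank $n-2$ distribution on that flat piece. This is exactly the rigidity the signed curvature hypothesis is meant to supply, via flat strip arguments of Gromov-Schroeder type in the nonpositive case and soul-theoretic arguments in the nonnegative case. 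Once these compatibilities are checked globally and local finiteness is in place, \tref{m2} immediately yields that $M^n$ is a \ggm\!\!.
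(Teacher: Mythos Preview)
The statement you are attempting to prove is \emph{Conjecture~2} in the paper, not a theorem; the authors explicitly leave it open and offer no proof. So there is no argument in the paper to compare yours against, and your proposal must be assessed as an attack on an open problem.

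Read that way, your outline has genuine gaps at exactly the places where the difficulty lies.

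\smallskip

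\textbf{Uniqueness of the limiting nullity.} You assert that under a curvature sign the limit $(n-2)$-plane at $p\in\partial V$ is independent of the approaching sequence, invoking ``convexity of the set of flat points'' and ``Toponogov or splitting arguments''. These are slogans, not arguments: nothing you write explains why a sign on $K$ forces two distinct twisted cylinders abutting the same flat region to have parallel limiting nullities. Example~2 in the paper exhibits three cylinders meeting a common flat cube with mutually transverse nullity directions; that example has curvature of both signs, but you supply no mechanism by which a sign hypothesis rules the configuration out. Note also that the paper's own Lemma~\ref{noc} shows that at every point of $\partial W$ for a maximal full extension $W$ there are at least \emph{two} distinct BNL's, so your expectation that $\Gamma$ extends continuously to $\overline V$ cannot hold across the eventual hypersurfaces $H_\lambda$.

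\smallskip

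\textbf{Local finiteness.} Your argument here rests on a false premise. You claim that ``the signed Gauss curvature provides a local volume lower bound for each twisted cylinder preventing them from becoming arbitrarily thin near their flat boundaries''. But the Gauss curvature of each generating surface vanishes---indeed to infinite order, as the paper emphasizes---along its boundary. A sign on $K$ gives no lower bound on $|K|$, and hence no packing obstruction to an infinite sequence of ever-thinner cylinders accumulating as in Example~3.

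\smallskip

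\textbf{Absorbing flat pieces.} You correctly flag step~(a) as ``the main obstacle'' and then defer it entirely to unspecified ``flat strip arguments of Gromov--Schroeder type'' and ``soul-theoretic arguments''. That is the content of the conjecture, not a proof of it.

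\smallskip

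In summary, the proposal is a plausible wish-list for what one would like to be true, but none of the three decisive steps is carried out, and the local-finiteness step is based on an incorrect assertion. The conjecture remains open.
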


In \cite{fz1} we classify all \ggms with nonnegative scalar curvature and
show that they are three dimensional up to a Euclidean factor. Moreover,
they are built as the union of at most two cylinders and, in particular,
are diffeomorphic to a lens space or a prism manifold.

\smallskip

Another interesting question is to what extent complete CN2 manifolds
with finite volume differ from \ggms from a
differentiable point of view:

\begin{question*} If $M^n$ admits a complete CN2 metric with finite
volume, does it also admit a \ggm metric?
\end{question*}

We caution that our definition of a graph manifold in dimension 3 is
more special than the usual topological one, where the pieces are
allowed to be nontrivial Seifert fibered circle bundles (\cite{w}). Ours
is similar, although more general, to the kind of graph manifolds one
studies in nonpositive curvature.

\vspace{5pt}

The paper is organized as follows. In \sref{examples} we provide some
examples in order to show that the two hypothesis in \tref{m2} are
necessary. A general semi-global version of the de Rham theorem is
provided in \sref{dr} and will be used in \sref{ptha} to prove \tref{m1}.
The proof of \tref{m2} is carried out in \sref{mainsec}.

\section{Examples.}\label{examples}

We now build some examples to help understand how \ggms are linked with
the CN2 property, and to what extent they differ. In particular, we
exhibit CN2 metrics on the 3-torus $T^3$ which are $C^\infty$
perturbations of the flat metric but that are not \ggm metrics.

\medskip

{\bf 1.} {\it The 3-torus as a nontrivial \ggm\!\!.}
Let $\su=[-1,1]^2$ with metric a $C^\infty$ perturbation of the flat
metric in a small open set $U\subset \su$ whose closure is contained in
the interior of $\su$. The cube $C=\su\times [-1,1]$ with its product
metric serves as a building block in all further examples, where the
second factor will give rise to the nullity foliations. Depending on the
example, we also adjust their sizes appropriately. Notice that the metric
necessarily has scalar curvature of both signs. We now glue two such
cubes along a common face in such a way that the nullity distributions
are orthogonal, see Figure 3. Identifying opposite faces of the resulting
larger cube defines a metric on $T^3$ with complete nullity foliations,
making it into a nontrivial graph manifold. Figure 3 shows the nonflat
points on the left, together with a full maximal extension and its two
(un)twisted cylinders on the right.
\begin{figure}[H]
\centering
\includegraphics[width=0.3\textwidth]{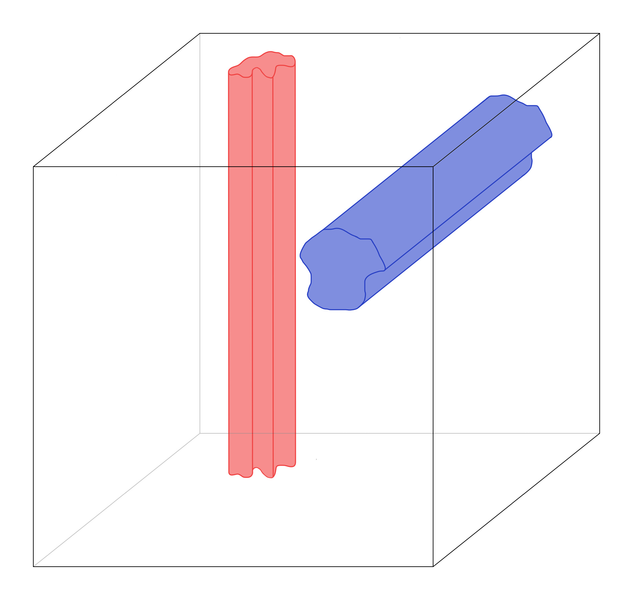}
\hspace{0.5cm}
\includegraphics[width=0.3\textwidth]{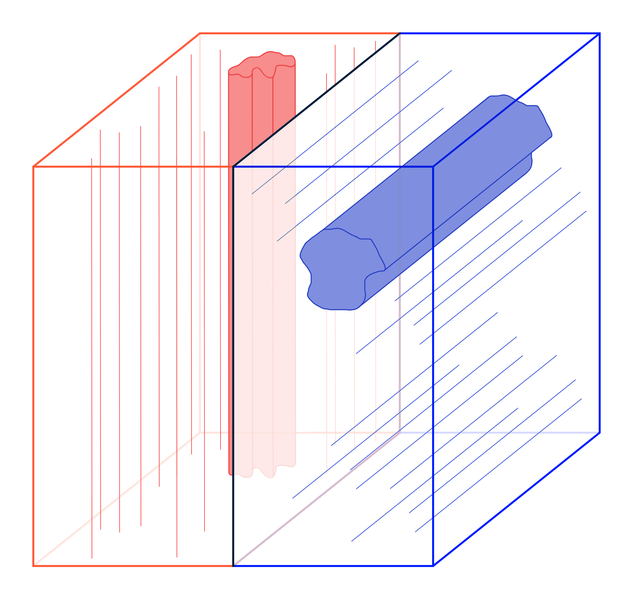}
\caption{\small A CN2 3-torus with its set of nonflat points,
and a full extension}
\end{figure}

\medskip

{\bf 2.} {\it A CN2 3-torus failing to be a \ggm\!\!: no maximal
dense extensions.}
Here we take three basic building blocks and glue them together as in
Figure 4. Adding two small flat cubes, we obtain a larger cube and
identifying opposite faces defines a CN2 metric on $T^3$. But this is not
a \ggm since the nullity distribution cannot be extended to a dense set
of $T^3$. Figure 4 shows the set of nonflat points on the left, and a
maximal extension of it on the right missing two octants of flat points.
We point out that this example also shows that a CN2 manifold does not
necessarily admit a~$T$ structure (see \cite{cg}), as graph manifolds do.
\begin{figure}[!ht]
\centering
\includegraphics[width=0.3\textwidth]{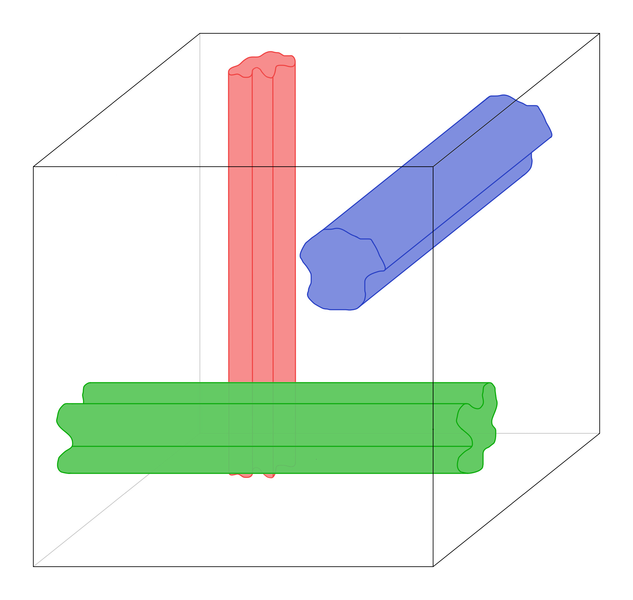}
\hspace{0.5cm}
\includegraphics[width=0.3\textwidth]{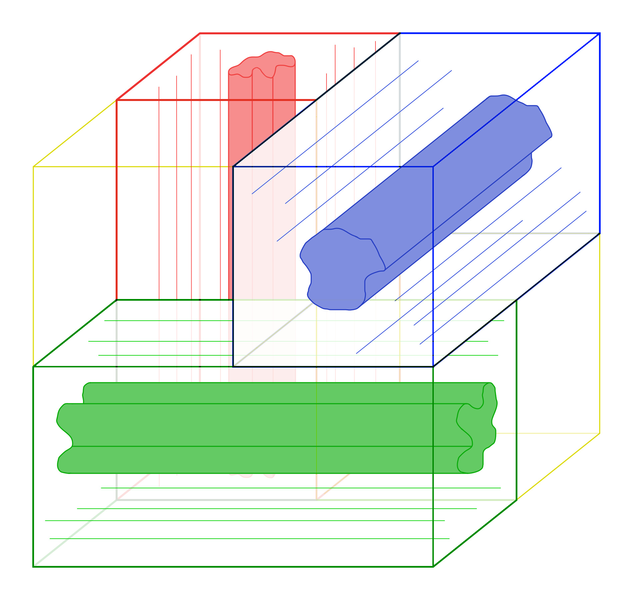}
\caption{\small A CN2 3-torus with its nonflat points
and a nondense maximal extension}
\end{figure}
\noindent

\medskip

{\bf 3.} {\it A CN2 3-torus failing to be a \ggm\!\!: no locally
finite extension.}
Take a sequence of building blocks $C_n=L_n\times[-1,1]$ with
$L_n=[-1/2^n,1/2^n]\times[-1,1]$. Glue one to the next along the squares
$\{\pm 1/2^n\}\times[-1,1]\times[-1,1]$ as in Example 1, with nullity
lines meeting orthogonally from one to the next, and accumulating at
a two torus $T^2=[-1,1]\times[-1,1]$. Now glue to this a copy of itself
along $T^2$. Identifying opposite sides in the resulting cube defines a
CN2 metric on $T^3$. The metric has two sequences of parallel totally
geodesic flat 2-tori, approaching $T^2$ from both sides. It is not a \ggm
since the number of connected components near $T^2$ of any extension
of~$V$ is infinite, even though there exist dense maximal extensions of
$V$. Notice that $T^2$ is disjoint from the union of the closures of the
connected components of any extension of $V$.

\medskip

{\bf 4.} {\it Drunken cylinders}.
We can modify the previous examples to obtain a more complicated behavior
of the twisted cylinders, illustrating another crucial difficulty in
trying to prove Conjecture 1 in the introduction. For this, we start with
a flat building block $C_n=L_n\times[-1,1]$ and identify two opposite
faces to to obtain a flat metric on $[-1/2^n,1/2^n]\times T^2$. Let
$\gamma_n$ be a closed geodesic in the two torus $\{0\}\times T^2$ and
modify the metric in a small tubular neighborhood of $\gamma_n$ in
$(-1/4^n,1/4^n)\times T^2$. The boundary of the resulting manifold
consists of two flat square tori and we can hence glue one to the next as
in the previous example. Gluing a mirror copy of the result and
identifying the remaining two faces, we obtain a CN2 metric on~$T^3$. We
can now choose the infinite sequence of cylinders comprising the example
such that the slopes of $\gamma_n$ converge. Notice that in the previous
examples the totally geodesic 2-tori separating components of a full
extension of the set of nonflat points have the property that they are
foliated by two different limits of nullity lines, whereas here the
torus~$T^2$ in the middle only has one family of limit nullity lines. The
existence of two families is crucial in proving convexity properties of
the boundary of a maximal full extension of the set of nonflat points;
see \sref{mainsec}.

\medskip

We finish this section with some examples that illustrate some of the
complexities of twisted cylinders.

\medskip

$a)$
Let $\Sigma$ be a compact surface with boundary with universal cover
$\su$, on which $G=\pi_1(\Sigma)$ acts freely. Now choose a homomorphism
$\alpha\colon G\to\R$ and an action of $G$ on $\su\times\R$ given by
$(x,t)\to (gx,t+\alpha(g))$. In the cylinder $C=(\su\times\R)/G$ the
integral leaves of $\Gamma^\perp$ are dense in $C$ as long as the values
of $\Im\alpha\subset\R$ are not all rationally dependent. Notice though
that $C$ is diffeomorphic to $\Sigma\times S^1$ by changing the action
continuously using the homomorphism $\e\alpha$ and letting $\e\to 0$.

\smallskip

$b)$
The boundary geodesics of the leaves of $\Gamma^\perp$ may not be closed.
As an example, start with a complete flat strip $[-1,1]\times\R$, remove
infinitely many $\e$-discs centered at $(0,n)$, $n\in \Z$, and change the
metric around them to make their boundaries totally geodesic in such a
way that the metric remains invariant under $\Z=\la h\ra$ for
$h(x,t)=(x,t+1)$. If $\su$ is this surface and $R$ is a rotation of $S^1$
of angle $r\pi$ for some irrational number $r$, then $G=\Z=\la(h,R)\ra$
acts freely and properly discontinuously on $\su\times S^1$. Although
$\su/G_1$ has three closed geodesics as boundary, the boundary of the
leaves of $\Gamma^\perp$ in $C=(\su\times S^1)/G$ has one closed
geodesic and also two complete open geodesics, each of which is dense
in the corresponding boundary component of $C$. 

\smallskip

$c)$
In general, a cylinder of finite volume does not
necessarily have compact boundary. Indeed, consider the metric
$ds^2=dr^2+e^{-\frac{1}{1-r^2}-t^2}dt^2$ on $\su=[-1,1]\times \R$.
Then $\su\times T^{n-2}$ has finite volume with two complete noncompact
flat totally geodesic boundary components. Nevertheless, we will see that
when we glue two twisted cylinders in a nontrivial way,
 their common boundary is 
compact.

\section{A semi-global de Rham theorem}\label{dr} 

The existence of a parallel smooth distribution $\Gamma$ on a complete
manifold $M^n$ implies that the universal cover is an isometric product
of a complete leaf of $\Gamma$ with a leaf of $\Gamma^\perp$ by the
global deRham theorem. In this section we prove a semi-global version of
this fact to be used later on, and will concentrate on flat foliations
for simplicity. Although this is a special case of Theorem 1 in
\cite{pr}, our proof is simpler and more direct, so we add it here for
completeness.

\begin{prop}\label{derham}
Let $W$ be an open connected set of a complete Riemannian manifold~$M^n$,
and assume that $\Gamma$ is a rank $k$ parallel distribution on $W$ whose
leaves are flat and complete. If $\D$ is a maximal leaf of
$\Gamma^\perp$, then the normal exponential map
$\exp^\perp:T^\perp\D\to W$ is an isometric covering, where $T^\perp\D$
is equipped with the induced connection metric. In particular, $W$ is
isometric to the twisted cylinder $(\tilde\D\times \R^k)/G$, where
$\tilde \D$ is the universal cover of $\D$, and $G$ acts isometrically in
the product metric.
\end{prop}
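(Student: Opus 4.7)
The plan is to define $\Phi\colon T^\perp\D\to W$ by $\Phi(p,v)=\exp_p(v)$, show that it is a local isometry, prove that it is an isometric covering via a path-lifting argument, and then trivialize on the universal cover of $\D$. Because $\Gamma$ is parallel, so is $\Gamma^\perp$; both distributions are integrable with totally geodesic leaves, and the local de Rham theorem provides a Riemannian product chart $U\times V$ about every point of $W$, with $U$ open in a $\Gamma^\perp$-leaf and $V$ open in a $\Gamma$-leaf. Such a chart yields the mixed-curvature identity $R(X,Y)Z=0$ whenever $X,Y,Z$ are split between $\Gamma$ and $\Gamma^\perp$ in a non-pure way. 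Completeness of the $\Gamma$-leaves makes $\Phi$ well defined on all of $T^\perp\D=\Gamma|_\D$.

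I would verify that $\Phi$ is a local isometry by splitting $T_{(p,v)}T^\perp\D$ via the connection. A vertical vector $w\in\Gamma(p)$ is sent to $d\exp_p|_v(w)$, which is an isometry into $\Gamma$ at $\exp_p(v)$ because the $\Gamma$-leaf is flat. A horizontal vector, arising from a curve $c(t)$ in $\D$ with $c(0)=p$ and a parallel section $v(t)\in\Gamma(c(t))$ with $v(0)=v$, is sent to $J(1)$, where $J$ is the Jacobi field along the $\Gamma$-geodesic $s\mapsto\exp_p(sv)$ satisfying $J(0)=c'(0)$ and $J'(0)=\nabla_{c'(0)}v=0$. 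The mixed-curvature identity yields $R(J,\dot\sigma)\dot\sigma=0$ for $\dot\sigma\in\Gamma$, so $J''=0$ and $J$ is parallel; parallelism of $\Gamma^\perp$ forces $J(1)\in\Gamma^\perp$ with $|J(1)|=|c'(0)|$. Horizontal and vertical vectors thus map orthogonally and isometrically to $\Gamma^\perp$ and $\Gamma$, so $d\Phi$ is an isometry at every point.

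The main step is to show $\Phi$ is a covering by establishing path lifting. For $\gamma\colon[0,1]\to W$ with $\gamma(0)=\Phi(p_0,v_0)$, I would extend the lift $\tilde\gamma(t)=(p(t),w(t))$ maximally by inverting $d\Phi$; since $\Phi$ is a local isometry, $\tilde\gamma$ has the same speed as $\gamma$ and hence finite length. Suppose the lift is defined only on $[0,\tau)$ with $\tau\le 1$. The fiber coordinate $|w(t)|$ equals the length of the $\Gamma$-geodesic from $p(t)$ to $\gamma(t)$ and is bounded, so completeness of the $\Gamma$-leaves keeps $w(t)$ in a compact piece of its fiber and $p(t)$ in a compact subset of $W$; hence $p(t)$ has an accumulation point $p^*\in W$. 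Work in a product chart $U\times V$ of $W$ about $p^*$: continuity of $p(t)$ as a path in the injectively immersed leaf $\D$ confines it, once inside the chart, to a single connected component of $\D\cap(U\times V)$, namely a slice $U\times\{v_\tau\}$. By maximality of $\D$, this entire slice lies in $\D$, so $p^*\in\D$, and the finite length of $\tilde\gamma$ then forces $\tilde\gamma(t)\to(p^*,w^*)\in T^\perp\D$ as $t\to\tau^-$; the lift extends past $\tau$. Combined with connectedness of $W$, this shows $\Phi$ is a surjective Riemannian covering of $W$.

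Finally, the induced connection on the vector bundle $\Gamma|_\D=T^\perp\D$ is flat since its curvature $R(X,Y)|_\Gamma$ for $X,Y\in\Gamma^\perp$ vanishes by the mixed-curvature identity. Pulling back to the universal cover $\tilde\D$ trivializes it isometrically as $\tilde\D\times\R^k$ with the Riemannian product metric, and composing with $\Phi$ yields a Riemannian covering $\tilde\D\times\R^k\to W$ whose deck group $G$ acts freely, properly discontinuously and by isometries on the product, giving $W\cong(\tilde\D\times\R^k)/G$. The delicate point is the path-lifting step: because $\D$ is only injectively immersed and possibly incomplete, one must exploit the slice structure in a product chart together with the maximality of $\D$ to keep the lift from escaping $T^\perp\D$.
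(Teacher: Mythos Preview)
Your proof is correct and follows essentially the same strategy as the paper: local isometry of $\exp^\perp$ via Jacobi fields along $\Gamma$-geodesics, then path lifting where the local product structure together with maximality of $\D$ forces the limit of the base curve to lie in $\D$, and finally trivialization over $\tilde\D$ via flatness of the normal connection. The paper separates surjectivity from the lifting step and phrases the key limit argument using the flow $\phi^\eta$ along parallel $\Gamma$-sections rather than your product-chart slice argument, but the content is the same.
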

\proof
Since $\Gamma$ is parallel, its orthogonal complement $\Gamma^\perp$
is also parallel and hence integrable with totally geodesic leaves.
Due to the local isometric product structure of $W$ given by the local
deRham Theorem, the normal bundle $T^\perp\D$ of $\D$ is flat with respect
to the normal connection of $\D$, which in our case is simply the
restriction of the connection on~$M^n$ since $\D$ is totally geodesic.
Notice though that $T^\perp\D$ does not have to be trivial.
The normal connection defines, in the usual fashion, a connection metric
on the total space. By completeness of the leaves of $\Gamma$, the normal
exponential map $\exp^\perp\colon T^\perp\D\to W$ is well defined on the
whole normal bundle.

We first show that $\exp^\perp$ is a local isometry. Indeed, if
$\alpha(s)=(c(s),\xi(s))$ is a curve in $T^\perp\D$ with $\xi$ parallel
along $c$, then $\alpha'(0)$ is a horizontal vector in the connection
metric, identified with $c'(0)$ under the usual identification
$H_{\xi(0)}\simeq T_{c(0)}\D$. Then
$d(\exp^\perp)_{\alpha(0)}(\alpha'(0))=J(1)$ where $J(t)$ is the Jacobi
field along the geodesic $\gamma(t)=\exp^\perp(t\xi(0))$ with initial
conditions $J(0)=c'(0)$ and $J'(0)=0$ since $\xi$ is parallel along~$c$.
Since $\gamma'\in\ker R$, the Jacobi operator
$R(\,\cdot\,,\gamma')\gamma'$ vanishes and thus $J(1)$ is the parallel
translate of $J(0)\in T_{c(0)}L$, which in turn implies that $\exp^\perp$
is an isometry on the horizontal space. On the vertical space it is an
isometry since it agrees with the exponential map of the fiber which is a
complete flat totally geodesic submanifold of $W$. The image of
horizontal and vertical space under $\exp^\perp$ are also orthogonal,
since the first is the parallel translate of $TL$ and the second the
parallel translate of $T^\perp L$ along $\gamma$. Hence $\exp^\perp$ is a
local isometry. Notice that this also implies that if $\xi$ is a
(possibly only locally defined) parallel section of $T^\perp L$, then
$\{\exp_p(t\xi(p))\mid p\in L\}\subset W$ are integral manifolds of
$\Gamma^\perp$ for all $t\in\R$.

Let $q\in W$ and denote by $L_q$ the maximal leaf of $\Gamma^\perp$
containing $q$. Since the normal exponential map of $L_q$ is also a local
isometry, there exists an $\e=\e(q)>0$ such that $B_\e(q)\subset L_q$ is
a normal ball and the set $\hat V_q:=\{\xi\in T^\perp B_\e(q):
\|\xi\|<\e\}\subset T^\perp L_q$ is isometric to the Riemannian product
$B_\e(q)\times B_\e\subset L_q\times\R^k\cong L_q\times T^\perp_qL_q$,
and $\exp^\perp:\hat V_q\to V_q:=\exp^\perp(\hat V_q)$ is an isometry.
In particular, $q\in V_q\subset W$. We identify $B_\e(q)\times B_\e$,
$\hat V_q$ and~$V_q$ via the normal exponential map of $L_q$.
Accordingly,
we denote the local leaf of~$\Gamma^\perp$ through $x=(p,v)\in V_q$
by $L_{x,q}:=B_\e(q)\times\{v\}\subset L_x\cap V_q$.
Moreover, for each $y\in V_q$ and $v\in T^\perp_yL_{y,q}$ there is a
parallel vector field $\xi$ in $V_q$ with $\xi(y)=v$, and an isometric
flow $\phi^\xi_t(x)=\exp_x(t\xi(x))$ for $x\in V_q$. Notice that this
flow is defined for all $t\in\R$, and that the images of the leaves
$L_{x,q}$ are again leaves of $\Gamma^\perp$ for all $t\in\R$,
$x\in V_q$.

We now claim that $\exp^\perp$ is surjective onto $W$. Take a point
$q\in W$ in the closure of the open set $U=\exp^\perp(T^\perp\D)$ in $W$,
and choose $y\in V_q\cap U$. Since the leaf of $\Gamma$ containing~$y$
is also contained in the image of $\exp^\perp$, we can assume that
$y\in L_{q,q}$. Then $y=\gamma(1)$, where $\gamma(t)=\exp^\perp(t\xi)$,
for $x\in L$ and $\xi\in T_x^\perp L$. If $\eta$ is the parallel vector
field in $V_q$ with $\eta(y)=\gamma'(1)\in T^\perp_yL_{q,q}$, then
$\phi^\eta_{-1} (L_{q,q})\subset L$ by maximality of $L$ and hence
$q\in U$. Hence, $U$ is closed in $W$, so $U=W$.

In order to finish the proof that $\exp^\perp$ is a covering map, we show
that it has the curve lifting property. Let $\alpha:[a,b]\to W$ be a
smooth curve, and assume there exists a lift of $\alpha|_{[a,r)}$ to a
curve $\tilde \alpha :[a,r)\to T^\perp L$. As usual, to extend
$\tilde \alpha$ past $r$ we only need to show that $\lim_{t\to
r}\tilde\alpha(t)$ exists. Write
$\tilde\alpha(t)=(c(t),\xi(t))\in T^\perp L$. Since $\exp^\perp$ is a
local isometry, $|\tilde \alpha'|=|\alpha'|$ and hence by the local
product structure $c$ and $\xi$ have bounded length. Thus,
$\lim_{t\to r}c(t)=x_\infty\in M$ and $\lim_{t\to r}\xi(t)=\xi_\infty$
exist by completeness of $M^n$ and $\R^k$. We only need to show that
$x_\infty\in L$ since then $\exp^\perp(x_\infty,\xi_\infty)=\alpha(r)$.

Consider $\delta>0$ such that $\alpha((r-\delta,r])\subset V_{\alpha(r)}$.
For $t\in (r-\delta,r)$ let $\eta_t$ be the parallel vector field in
$V_{\alpha(r)}$ with
$\eta_t(\alpha(t))=\gamma_t'(1)\in T^\perp L_{\alpha(t)}$ where
$\gamma_t(s)= \exp^\perp(s\xi(t))$. We then have
$\lim_{t\to r}\eta_t=\eta_\infty\in T^\perp L_{\alpha(r)}$ with
$\exp_{\alpha(r)}(-\eta_\infty)=x_\infty\in W$ and hence
$\phi^{\eta_\infty}_{-1}(L_{\alpha(r),\alpha(r)} )\subset L_{x_\infty}$.
Since we also have that
$\phi^{\eta_t}_{-1}(L_{\alpha(t),\alpha(r)})\subset L$ for $t<r$,
it follows that $\lim_{t\to r}T_{c(t)}L=T_{x_\infty}L_{x_\infty}$ and
thus $L\cup L_{x_\infty}$ is an integral leaf of $\Gamma^\perp$. By the
maximality of $L$ we conclude that $L_{x_\infty}\subset L$ and therefore
$x_\infty\in L$, as we wished.

Finally, if $\pi\colon \tilde L\to L$ is the universal cover, then
$\pi^*(T^\perp L)\to T^\perp L$ is also a cover and since $\pi^*(T^\perp
L)$ is again a flat vector bundle over a simply connected base, it is
isometric to $\tilde L\times\R^k$. This proves the last claim.
\qed

\section{The structure of the set of nonflat points}\label{ptha} 

This section is devoted to the proof of the following stronger version of
\tref{m1}.

\begin{theorem}\label{product}
Let $M^n$ be a complete CN2 manifold, and $V$ a connected component of
the set of nonflat points of $M^n$. If $V$ has finite volume, then its
universal cover is isometric to $\gencylrec$, where
$\su$ is a simply connected surface whose Gauss curvature is
nowhere zero and vanishes at its boundary.
\end{theorem}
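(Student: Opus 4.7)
The plan is to apply \pref{derham} directly to $V$ with the rank $n-2$ nullity distribution and then read off the structure of the resulting generating surface. First I would verify that $\mu\equiv n-2$ on $V$: the assumption gives $\mu\geq n-2$; $\mu=n$ is excluded since $V$ consists of nonflat points; and $\mu=n-1$ is algebraically impossible, because the symmetries of the curvature tensor (antisymmetry in each pair together with pair-swap symmetry) force $R\equiv 0$ whenever the complement of the nullity is one-dimensional. Hence on $V$ there is a smooth parallel nullity distribution $\Gamma$ of constant rank $n-2$, with totally geodesic flat leaves by the facts recalled in the introduction. Since $n-2$ is the global minimum of $\mu$ on $M^n$, the completeness result of Maltz \cite{ma} guarantees that these leaves are complete as abstract flat manifolds and stay inside $V$ by upper semicontinuity of $\mu$.

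With these hypotheses in hand, \pref{derham} applies to $W=V$ with $k=n-2$. It yields that for any maximal leaf $L$ of the parallel orthogonal distribution $\Gamma^\perp$, the normal exponential map $\exp^\perp\colon T^\perp L\to V$ is an isometric covering, and $V$ is isometric to $(\tilde L\times\R^{n-2})/G$ for some group $G$ acting properly discontinuously, freely and isometrically on the product. Because $\tilde L\times\R^{n-2}$ is simply connected, it must be the universal cover of $V$, as claimed. Setting $\su:=\tilde L$, the surface $\su$ is simply connected by construction, and its Gauss curvature at any point is identified under the local isometry with the unique nontrivial sectional curvature of $M^n$ along $\Gamma^\perp$ at the corresponding point of $V$; as $V$ is the nonflat locus, this is nowhere zero on $\su$.

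It remains to show that the Gauss curvature of $\su$ vanishes at $\partial\su$. Heuristically, points of $\partial\su$ correspond, via the local isometry, to limit points of $V$ in $M^n$, hence to points of $\partial V$ lying in the flat locus, where the ambient sectional curvature vanishes by continuity. Making this precise, and in particular ruling out ``phantom'' boundary components of $\su$ that do not correspond to genuine flat points of $M^n$ --- for instance ends at infinity along which the Gauss curvature stays bounded away from zero --- is what I expect to be the main obstacle. This is where the finite-volume hypothesis on $V$ should enter essentially: a Fubini-type argument using the product structure $V\cong(\su\times\R^{n-2})/G$ should control the geometry of the generating surface modulo the induced action of $G$, forcing every boundary point of $\su$ to be approached from a flat point of $M^n$ and the Gauss curvature therefore to vanish at $\partial\su$ by continuity of $R$ on $M^n$.
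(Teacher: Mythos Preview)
Your proposal has a genuine gap at the very first step: you assert that on $V$ the nullity distribution $\Gamma$ is \emph{parallel}, and then invoke \pref{derham}. But parallelism of $\Gamma$ is not one of the ``facts recalled in the introduction''; what is recalled there (and what Maltz \cite{ma} gives) is only that on the open set where $\mu$ is minimal, $\Gamma$ is an integrable distribution with totally geodesic flat complete leaves. Parallelism of $\Gamma$ is equivalent to $\Gamma^\perp$ being totally geodesic as well, i.e.\ to the vanishing of the splitting tensor $C_TX=-(\nabla_XT)_{\Gamma^\perp}$, and this is \emph{precisely} the nontrivial content of the theorem. Without it, \pref{derham} does not apply.

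This is not a minor oversight: complete locally irreducible CN2 manifolds exist in every dimension (see \cite{se1,BKV}), so $\Gamma$ is genuinely not parallel in general. The paper's proof uses the finite-volume hypothesis in an essential way to force $C\equiv 0$: along nullity geodesics $C_{\gamma'}$ satisfies the Riccati equation $C'=C^2$, whose explicit solution \eqref{sol} shows that real eigenvalues must vanish; then a ``weak recurrence'' argument, available only because $\vol(V)<\infty$, rules out both the complex-eigenvalue case (via the decay of $\det C$ in \eqref{tr}) and the nilpotent case (via the structure equations \eqref{scal} together with the scalar curvature being nonzero on $V$). Your plan places the finite-volume hypothesis only at the boundary-curvature step, but in fact it is needed much earlier, and the boundary statement is comparatively soft once the product structure is established.
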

\begin{proof}
First, recall that, since $\Gamma=\ker R$ is a totally geodesic
distribution, we have its {\it splitting tensor} $C:\Gamma\to
End(\Gamma^\perp)$ defined as
$$
C_TX=-(\nabla_XT)_{\Gamma^\perp},
$$
where $\nabla$ is the Levi-Civita connection of $M^n$, and a distribution
as a subscript means to take the corresponding orthogonal projection.
Clearly, $\Gamma^\perp$ is totally geodesic if and only if $C\equiv0$,
that is equivalent to the parallelism of $\Gamma$. Since the set of
nonflat points in a CN2 manifold agrees with the points of minimal
nullity, $V$ is saturated by the flat complete leaves of $\Gamma$.
Therefore, by \pref{derham}, all we need to show is that $C$ vanishes.

\smallskip

Let
$U,S\in\Gamma$ and $X\in\Gamma^\perp$. Since $\Gamma$ is totally
geodesic,
\begin{align*}
C_{\nabla_US}X&=-(\nabla_X\nabla_US)_{\Gamma^\perp}=
-(\nabla_U\nabla_XS)_{\Gamma^\perp}-(\nabla_{[X,U]}S)_{\Gamma^\perp}\\
&=(\nabla_U(C_SX))_{\Gamma^\perp}+C_S([X,U]_{\Gamma^\perp})
=(\nabla_UC_S)X+C_S(\nabla_UX)-C_S([U,X]_{\Gamma^\perp})\\
&=(\nabla_UC_S)X+C_S(\nabla_XU)=(\nabla_UC_S)X-C_SC_UX,
\end{align*}
or
\begin{equation}\label{superric}
\nabla_UC_S=C_{\nabla_US}+C_SC_U,\ \ \forall\ U,S\in\Gamma.
\end{equation}
We now consider the so called {\it nullity geodesics}, i.e. complete
geodesics $\gamma$ with $\gamma'(0)\in\Gamma$, which are hence
contained in a leaf of $\Gamma$. Along such a geodesic $\gamma$,
by \eqref{superric} the splitting tensor $C_{\gamma'}$ satisfies the
Riccati type differential equation
$\nabla_{\gamma'}C_{\gamma'}=C_{\gamma'}^2$
over the entire real line. That is, with respect to a parallel basis,
\begin{equation}\label{sol}
C_{\gamma'}'=C_{\gamma'}^2, \text{\ whose solutions are }
C(t)=C_0(I-tC_0)^{-1},
\text{\ for }\ C_0:=C_{\gamma'(0)}.
\end{equation}
Therefore, along each nullity geodesic $\gamma$ in $V$, all real
eigenvalues of $C_{\gamma'}$ vanish. Since $\Gamma^\perp$ is
2-dimensional, for every $S\in\Gamma$ either all eigenvalues of $C_S$ are
complex and nonzero, or all eigenvalues are $0$, i.e. $C_S$ is nilpotent.
In particular, $C_S$ vanishes if it is self adjoint.

Let $W=\{p\in V: C\neq 0 \text{ at } p\}$, i.e. on $V\setminus W$ all
splitting tensors vanish. Since the space of self-adjoint endomorphisms
of $\Gamma^\perp$ is pointwise 3-dimensional and intersects
$\im C\subset End(\Gamma^\perp)$ only at 0, it follows that $\dim\im C=1$
in $W$, and hence $\ker C$ is a smooth codimension 1 distribution of
$\Gamma$ along $W$. Accordingly, write
$$
\Gamma=\ker C\oplus^\perp \spa\{T\},
$$
for a unit vector field $T\in\Gamma$, which is well defined, up to sign,
on $W$. By going to a two-fold cover of $W$ if necessary, we can
assume that $T$ can be chosen globally on $W$.

Observe that if $U,S$ are two sections of $\ker C$, then \eqref{superric}
implies that $\nabla_US\in \ker C$, i.e. $\ker C$ is totally geodesic,
and $\nabla_TU=0$ as well. Since $\Gamma$ is totally geodesic it follows
that $\nabla_TT=0$, that is, the integral curves $\gamma$ of $T$ are
nullity geodesics. Therefore, from now on let for convenience $C=C_T$,
$C(t)=C_{\gamma'(t)}$, and denote by $'$ the derivative in direction of
$T$. In particular,
\begin{equation}\label{div}
\text {div } T=\tr\nabla T=-\tr C.
\end{equation}
By \eqref{sol} we have
\begin{equation}\label{tr}
\tr C(t)=\frac{\tr C_0-2t\det C_0}{1-t\tr C_0+t^2\det C_0},\ \
\text{ and }\ \
\det C(t)=\frac{\det C_0}{1-t\tr C_0+t^2\det C_0}.
\end{equation}
Take $B\subset W$ a small compact neighborhood. Since either $\det C>0$
or $\det C=\tr C=0$ on $W$, by \eqref{tr} there is $t_0\in\R$ such that
$\tr C(t)(q)\leq 0$ for every $q\in B$ and every $t\geq t_0$. In
addition, defining $B_t:=\phi_{t+t_0}(B)$ and $v(t):=\vol B_t$
we have that
$$
v'(t)=\int_B\frac{d}{dt}\phi_t^*(dvol)=\int_B\text{div }T
=-\int_B\tr C\geq0,\ \ \ \forall\ t\geq 0.
$$
So, the sequence of compact neighborhoods
$\{B_{n}, n\in \N\}$ has nondecreasing volume in the set $V$ of
finite volume, and thus there is a strictly increasing sequence
$\{n_k:k\geq 0\}$ such that $B_{n_k}\cap B_{n_0}\neq\emptyset$
for all $k\geq 1$. We will refer to this property as
{\it weak recurrence}. In particular, there exists a sequence
$p_k:=\phi_{t_0+n_k}(q_k)\in B_{n_k}\cap B_{n_0}$, with $q_k\in B$,
which has an accumulation point $p\in B_{n_0}\subset W$.

Consider the open subset $W'\subset W$ on which $C$ has nonzero complex
eigenvalues and notice that, by \eqref{sol}, $W'$ is invariant under the
flow $\phi_t$ of $T$. Using the above recurrence and sequence of points
$p_k\to p$, \eqref{tr} implies that
$$
\det C_{T(p)}=\lim_{k\to+\infty}\det C_{T(p_k)}=\lim_{k\to+\infty}
\frac{\det C_{T(q_k)}}{1-(t_0-n_k)\tr C_{T(q_k)}+(t_0-n_k)^2
\det C_{T(q_k)}}=0,
$$
since $n_k\to+\infty$ and $q_k$ lies in the compact set $B$.
But this contradicts the fact that $p\in B_{n_0}\subset W'$, where
$\det C>0$. Thus $C$ vanishes on $W'$, which is a contradiction and
shows that $C$ is nilpotent on $W$.

We thus have a well defined 1-dimensional distribution on $W$ spanned by
the kernel of $C=C_T$, which is parallel along nullity lines by
\eqref{sol}. Replacing $W$, if necessary, by the two-fold cover where
this distribution has a section, and by a further cover to make $W$
orientable, we can assume that there exists an orthonormal basis
$e_1,e_2$ of $\Gamma^\perp$, defined on all of $W$, and parallel along
nullity lines with
\begin{equation*}
C(e_1)=0, \ \ C(e_2) = a e_1.
\end{equation*}
Hence
$$
\nabla_Te_1=\nabla_Te_2=\nabla_TT=0,\quad\nabla_{e_1}T=0,
\quad\nabla_{e_2}T=-ae_1,
$$
$$
\nabla_{e_1}e_1=\alpha e_2,\quad \nabla_{e_2}e_2=\beta e_1,
\quad \nabla_{e_1}e_2=-\alpha e_1,\quad \nabla_{e_2}e_1=aT-\beta e_2,
$$
for some smooth functions $\alpha,\beta$ on $W$. A calculation shows that
\begin{align*}
R(e_2,e_1)e_1&=(e_1(\beta)+e_2(\alpha)-\alpha^2-\beta^2)e_2 + (a\beta-e_1(a))T,\\
R(e_1,e_2)e_2&=(e_1(\beta)+e_2(\alpha)-\alpha^2-\beta^2)e_1 + \alpha aT,
\end{align*}
and hence
\begin{equation}\label{scal}
\alpha=0,\ \ \Scal_M=e_1(\beta)-\beta^2,\ \ \text{and}\ \ e_1(a)=a\beta,
\end{equation}
where $\Scal_M$ stands for the scalar curvature of $M^n$.
The differential equation \eqref{sol} implies that $a'=0$, i.e. $a$ is
constant along nullity lines. Thus
$$
e_2(a)'=e_2(a')+[T,e_2](a)=(\nabla_Te_2-\nabla_{e_2}T)(a)=ae_1(a),
$$
and $e_1(a)'=e_1(a')+[T,e_1](a)=0$. So, $e_2(a)=ae_1(a)t+d$ for some
smooth function $d$ independent of $t$. By the weak recurrence property,
$e_1(a)(p)=0$ for $p\in B_{n_0}$ as above, and hence
$e_1(a)(p')=0$ as well, for $p'=\phi_{-n_0-t_0}(p)\in B$. Since $B$
is arbitrary small, we have that $e_1(a)\equiv 0$ on $W$.
But then
\eqref{scal} implies that $\beta=0$ and thus $\Scal_M=0$, contradicting
that along $ V$ we have no flat points. Altogether, $C$ vanishes
everywhere on $V$ and hence $\Gamma$ is parallel.
\end{proof}
\vspace{1.5ex}

The proof becomes particularly simple for $n=3$ since then $\Gamma$
is one dimensional and only the differential equation $C'=C^2$ along the
unique nullity geodesics is needed. The local product structure in this
case was proved earlier in \cite{sw} with more delicate techniques.

\bigskip

Let us finish this section with some observations about the
geometric structure when the manifold is complete but the finite volume
hypothesis is removed.

\begin{rem*}
$a)$ If $C_T$ is nilpotent, then $\Scal$ is constant along nullity
leaves. If this constant is positive, and $M^n$ is complete, then the
universal cover is isometric to $\su\times \R^{n-2}$. This follows since
by \eqref{scal}~$a^{-1}$ satisfies the Jacobi equation
$(a^{-1})''+a^{-1}\Scal=0$ along integral curves of $e_1$, which cannot
be satisfied for all $t\in\R$. For $n=3$ this splitting was also proved
in \cite{am}.

$b)$ If $C_T$ does not vanish and has complex eigenvalues, then
$\Scal(t)=\Scal(0)/(1-t\tr C_0+t^2\det C_0) $ along nullity lines since
one easily sees that $\Scal'=(\tr C_T) \Scal$. As was shown in \cite{sz},
if $M^n$ is complete and $C_T$ has no real eigenvalues, then the
universal cover of $M^n$ splits off a Euclidean space of dimension
$(n-3)$, i.e. all locally irreducible examples are 3-dimensional.
\end{rem*}

\section{CN2 manifolds as \ggms}\label{mainsec} 

The purpose of this section is to prove \tref{m2} in the introduction.
As we will see, the proof is quite delicate and technical due to the
lack of any {\it a priori} regularity of the boundary of a maximal full
extension. The strategy is to consider a maximal extension of the set
of nonflat points which, by \tref{m1}, is a union of twisted cylinders,
and then to analyze their geometric properties at contact points.

\hspace{0.4cm}

We begin with some definitions.

\begin{definition}\label{lf}
{\rm We say that an open subset $W$ of a topological space $M$ is
{\it locally finite} if, for every $p\in \partial W$, there exists an
integer $m$ such that, for every neighborhood $U\subset M$ of $p$, there
exist a neighborhood $U'\subset U$ of $p$ such that $U'\cap W$
has at most $m$ connected components. We denote by $m(p)$ the minimum of
such integers $m$.}
\end{definition}

In this situation, for every $p\in\pw$, there exists a neighborhood
$U_p\subset M$ of $p$ such that $U_p\cap W$ has precisely $m(p)$
connected components. Notice that each component contains~$p$ in its
boundary since by finiteness all other components have distance to $p$
bounded away from $0$. We call these components $W_i$ the
{\it \lccs of $W$ at $p$}. Notice also that $U_p$ can be chosen
arbitrarily small. In fact, given any neighborhood $U$ of $p$ with
$U\subset U_p$ we can construct a new neighborhood $U_p(U)$ of $p$ as
follows. Let $X$ be the union of the connected components of $U\cap W$
that contain $p$ in their boundary. Then $U_p(U)=(\overline X)^\circ$ is
a neighborhood of $p$ (by local finiteness) and $U_p(U)\cap W$ has $m(p)$
connected components. Observe also that $U_p(U)\cap W_i$ are the
connected components of $U_p(U)\cap W$ and all contain~$p$ in their
boundary. Throughout this section $U_p$ will always denote such a
neighborhood of~$p$ and $W_i$ the connected components of $U_p\cap W$.

In particular, by taking any $\delta>0$ such that
$B_\delta(p)\subset U_p(B_\e(p))$ we get:

\begin{lem}\label{e}
If $M^n$ is a Riemannian manifold, $W\subset M^n$ is locally finite and
$p\in\pw$, then for every ball $B_\e(p)\subset U_p$ there is
$0<\delta=\delta(\e,p)<\e$ such that $W_i\cap B_\delta(p)$ is
arc-connected in $W_i\cap B_\e(p)$, for all $1\leq i\leq m$.
\end{lem}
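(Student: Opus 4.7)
The plan is to reduce the lemma to a direct consequence of the construction of the auxiliary neighborhood $U_p(U)$ described in the paragraph preceding the statement, together with the standard fact that an open connected subset of a manifold is arc-connected. Given $\e>0$ with $B_\e(p)\subset U_p$, I would first choose an auxiliary radius $\e'$ with $0<\e'<\e$ and $\overline{B_{\e'}(p)}\subset B_\e(p)$, and set $U:=B_{\e'}(p)$. Then $U\subset U_p$ and $\overline{U}\subset B_\e(p)$.

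Next I would apply the construction $U_p(U)=(\overline{X})^\circ$, where $X$ is the union of those connected components of $U\cap W$ containing $p$ in their boundary. By the observations just above the lemma, $U_p(U)$ is an open neighborhood of $p$, $U_p(U)\cap W$ has exactly $m=m(p)$ connected components, and these components are precisely the sets $U_p(U)\cap W_i$ for $1\leq i\leq m$. Moreover, by construction $U_p(U)\subset \overline{X}\subset \overline{U}\subset B_\e(p)$, so
$$
U_p(U)\cap W_i\ \subset\ W_i\cap B_\e(p).
$$

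Since $U_p(U)\cap W_i$ is open in $M^n$ (as the intersection of two open sets) and connected, and since a Riemannian manifold is locally arc-connected, $U_p(U)\cap W_i$ is arc-connected. Now I would pick $\delta=\delta(\e,p)>0$ so small that $B_\delta(p)\subset U_p(U)$, which is possible because $U_p(U)$ is an open neighborhood of $p$. Then for every $1\leq i\leq m$,
$$
W_i\cap B_\delta(p)\ \subset\ W_i\cap U_p(U)\ \subset\ W_i\cap B_\e(p),
$$
and any two points of $W_i\cap B_\delta(p)$ may be joined by an arc lying in the connected open set $W_i\cap U_p(U)$, hence in $W_i\cap B_\e(p)$, which is exactly the arc-connectedness claim.

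There is no real obstacle beyond carefully unpacking the definition of $U_p(U)$: the only subtle point is making sure the auxiliary $U$ is chosen with $\overline{U}\subset B_\e(p)$ so that $U_p(U)$ is trapped inside $B_\e(p)$ and not merely inside $\overline{B_\e(p)}$. Once this is arranged, the statement follows formally from the properties of $U_p(U)$ recorded in \dref{lf} and the paragraph following it.
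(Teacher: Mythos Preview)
Your argument is correct and follows exactly the route indicated in the paper, which proves the lemma in one line by taking any $\delta>0$ with $B_\delta(p)\subset U_p(B_\e(p))$. Your intermediate radius $\e'$ is an unnecessary extra caution: since $X\subset B_\e(p)$ gives $\overline{X}\subset\overline{B_\e(p)}$, and for a small ball in a Riemannian manifold $(\overline{B_\e(p)})^\circ=B_\e(p)$, one already has $U_p(B_\e(p))\subset B_\e(p)$ directly; but the added step is harmless and the proof is otherwise identical in spirit.
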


\smallskip

Let $M^n$ be a complete CN2 manifold whose nullity distribution is
parallel along the set of nonflat points $V$ of $M^n$, as is the case
when $M^n$ has finite volume by \tref{m1}. Suppose $V$ has a full
extension $W\subset M^n$, that is, $W\supset V$ is open, dense, locally
finite, and $W$ possesses a smooth parallel distribution $\Gamma$ of rank
$n-2$ whose leaves are flat and complete. Since any extension of a full
extension is also full, we will assume in addition that $W$ is maximal.
Clearly, along $V\subset W$, $\Gamma$ coincides with the nullity of
$M^n$. Observe that maximal extensions always exist by definition, but
they are not necessarily unique, and may fail to be dense or locally
finite as shown in Examples 3 and 4 in \sref{examples}. We call the
leaves of $\Gamma$ nullity leaves and, for simplicity, use $\Gamma(p)$
both for the distribution at~$p$ and for the leaf of $\Gamma$ through
$p$.

\smallskip

Observe that, for each sequence $\{p_n\}$ in $W$ approaching a point
$p\in\partial W$, $\Gamma(p_n)$ accumulates at an $(n-2)$-dimensional
subspace of $T_pM$ whose image under the exponential map gives a complete
totally geodesic submanifold of~$M^n$, by completeness of the leaves
of~$\Gamma$. We still denote the set of all these limit submanifolds by
$\Gamma(p)$, and call each of them a {\it boundary nullity leaf} at $p$,
or BNL for short. In addition, given $U\subset W$ with $p\in\partial U$,
we denote by $\Gamma_U(p)\subset\Gamma(p)$ the BNL's at $p$ that arise as
limits of nullity leaves in $U$. In particular, if $W_1,\dots,W_m$ are
the \lccs of $W$ at $p$, we have
$$
\Gamma(p)=\Gamma_{W_1}(p)\cup\cdots \cup \Gamma_{W_m}(p).
$$

We start with the following observation.

\begin{lem}\label{4}
Let $p\in \pw$ such that $\Gamma(p)$ has only one BNL $\mu$.
Then, $\Gamma(q)=\{\mu\}$ for all $q\in\mu$.
\end{lem}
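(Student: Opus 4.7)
The plan is to prove this by an open-closed argument on the connected submanifold $\mu$. Define
$$
S := \{q\in\mu : \Gamma(q)=\{\mu\}\}.
$$
Then $p\in S$ by hypothesis, so $S\neq\emptyset$; since $\mu$ is a connected complete totally geodesic flat submanifold of $M^n$ (being the image $\exp_p(T_p\mu)$ of a subspace), it suffices to show that $S$ is both open and closed in $\mu$.

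The first ingredient is \emph{existence}: $\mu\in\Gamma(q)$ for every $q\in\mu$. Choose $p_n\to p$ with $p_n\in W$; the hypothesis that $\mu$ is the unique BNL at $p$ implies that the complete flat totally geodesic submanifolds $\Gamma(p_n)$ converge geometrically to $\mu$ (as $\exp_{p_n}$-images of tangent $(n-2)$-planes that, parallel-transported to $p$, converge to $T_p\mu$). Since $q\in\mu$, we can find $q_n\in\Gamma(p_n)$ with $q_n\to q$; because $q_n\in W$ and $\Gamma(q_n)=\Gamma(p_n)$, we obtain $\Gamma(q_n)\to\mu$. Thus $\mu$ is either the nullity leaf through $q$ (if $q\in W$) or a BNL at $q$ (if $q\in \pw$), settling one inclusion.

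The second ingredient is a \emph{uniform convergence} statement at points of $S$: if $q\in S$, then for every $\eps>0$ there is $\delta>0$ such that every $r\in W\cap B_\delta(q)$ satisfies $d_{\mathrm{Gr}}(P_{\gamma_r}(T_r\Gamma(r)),T_q\mu)<\eps$, where $\gamma_r$ is the minimizing geodesic from $r$ to $q$. This is a compactness argument: failure would produce, via Grassmannian compactness, a second BNL at $q$, contradicting $q\in S$. With this in hand, openness of $S$ is immediate. For $q\in S$ and $q'\in\mu$ with $d(q,q')<\delta/2$, any $\nu\in\Gamma(q')$ arises from some sequence $r_m\to q'$ in $W$ with $T_{r_m}\Gamma(r_m)\to T_{q'}\nu$; uniform convergence at $q$ then forces $P_{\gamma^*}(T_{q'}\nu)=T_q\mu$, where $\gamma^*$ is the minimizing geodesic from $q'$ to $q$. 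But $\gamma^*\subset\mu$ because $\mu$ is totally geodesic and $q'$ is near $q$, so parallel transport along $\gamma^*$ also sends $T_{q'}\mu$ to $T_q\mu$; hence $T_{q'}\nu=T_{q'}\mu$, forcing $\nu=\mu$ as complete totally geodesic submanifolds. Closedness of $S$ follows from the same argument run diagonally along a sequence $q_n\to q$ in $S$.

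The hardest step is establishing the uniform convergence at $q\in S$. Although the unique-BNL hypothesis at $q$ trivially gives pointwise convergence of $T_{r_n}\Gamma(r_n)$ to $T_q\mu$ along every sequence $r_n\to q$ in $W$, the uniformity required to compare sequences approaching nearby $q'\in\mu$ against $T_q\mu$ must be extracted by a careful Grassmannian compactness argument, with tangent Grassmannians at distinct base points identified via parallel transport along short minimizing geodesics. Once this is handled, the open-closed dichotomy together with Step~1 closes the proof.
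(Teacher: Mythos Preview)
Your existence step (showing $\mu\in\Gamma(q)$ for every $q\in\mu$ by propagating leaves from near $p$ along the complete leaf $\mu$) is correct and matches the paper's setup. The gap is in the uniqueness argument via the open--closed dichotomy. Your ``uniform convergence'' ingredient, which is indeed a consequence of Grassmannian compactness, only says: for every $\eps>0$ there is $\delta(\eps)>0$ such that $r\in W\cap B_{\delta(\eps)}(q)$ implies $d_{\mathrm{Gr}}\bigl(P_{\gamma_r}(T_r\Gamma(r)),T_q\mu\bigr)<\eps$. Passing to limits along $r_m\to q'$ with $d(q,q')<\delta(\eps)/2$ therefore yields only that $P_{\gamma^*}(T_{q'}\nu)$ lies \emph{within $\eps$} of $T_q\mu$, not that it equals $T_q\mu$ as you claim. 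Since $\eps$ was fixed when $\delta(\eps)$---and hence the admissible neighborhood for $q'$---was chosen, you cannot send $\eps\to 0$ while keeping $q'$ fixed. What you have proved is only upper-semicontinuity of the set-valued map $q'\mapsto\Gamma(q')$ at $q$, which is strictly weaker than $\Gamma(q')=\{\mu\}$; nothing prevents a second BNL at $q'$ that merely tends to $\mu$ as $q'\to q$. The closedness diagonal argument has the same defect: the radii $\delta(q_n,\eps_n)$ need not be bounded below, so a sequence $r_m\to q$ cannot be captured by the control region at any $q_n$.

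The paper's proof supplies precisely the missing geometric idea. Your propagation step already shows that the nullity leaves through a small transversal $\exp_p(\mu^\perp\cap B_\e(0_p))$ sweep out an open \emph{dense} subset of $W$ near $q$, with all of them $C^0$-close to $\mu$ there. Now if a second BNL $\mu'\ne\mu$ existed at $q$, it would arise as a limit of leaves in some local connected component $W'$ at $q$; those leaves are close to $\mu'$ near $q$, hence transversal to the propagated family. But both families live inside $W$, where $\Gamma$ is an honest foliation, and by density they must meet---so transversal leaves would intersect, a contradiction. This foliation/intersection argument gives uniqueness at every $q\in\mu$ directly, with no induction along $\mu$ needed.
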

\proof
By definition, there is a unique BNL $\mu\in\Gamma(p)$. The hypersurface
$B'_\e(p):=\exp(\mu^\perp\cap B_\e(0_p))$ is then transversal to $\Gamma$
in $W'_\e(p)=W\cap B'_\e(p)$, which is thus dense in $B'_\e(p)$.

Take $q\in\mu$, and write it as $q=\gamma_v(T)$ for some $v\in T_p\mu$,
$T\in\R$, $\|v\|=1$. By continuity of the geodesic flow at $v$, given
$\delta>0$, there is $\e>0$ such that all the nullity leaves of~$W$
through $W'_\e(p)$ are $C^0$ $\delta$-close to $\mu$ inside a compact
ball of radius, say, $2T$, centered at~$p$. In particular, these nullity
lines of $W'_\e(p)$ stay close to $\mu$ at $q$ and form an open dense
subset. This implies that there cannot be two different BNL's at $q$.
Indeed, a second $\mu'\in\Gamma(p)$ is a limit of leaves of $\Gamma$ of a
local connected component $W'$ at $q$. Thus all leaves in $W'$ are close
to $\mu'$ which implies that leaves of $\Gamma$ on $W$, where it is an
actual foliation, would intersect near $q$.
\qed
\vspace{1.5ex}

For the next two lemmas we need a relationship between curvature bounds
and local parallel transport for Riemannian vector bundles over surfaces.

\begin{lem}\label{integral}
Let $E^k$ be a Riemannian vector bundle with a compatible connection
$\nabla$ over a surface $S$, and let $D\subset S$ be a region
diffeomorphic to a closed 2-disk with piecewise smooth boundary~$\alpha$.
If the curvature tensor of $E^k$ is bounded by $\delta>0$ along $D$, then
the angle between any vector $\xi\in E_{\alpha(0)}$ and its parallel
transport along $\alpha$ is bounded by $(k-1)\,\delta$Area$(D)$.
\end{lem}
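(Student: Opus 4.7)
The plan is to deduce the estimate from the non-abelian Stokes-type identity relating the holonomy around a closed curve to the curvature integrated over a bounding disk.

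First I would reduce to smooth $\alpha$: since the holonomy depends continuously on the curve in the $C^1$-topology, and the corner-smoothing can be arranged so that the bounded region changes area by an arbitrarily small amount, it suffices to prove the estimate for smooth $\alpha$ and then pass to the piecewise-smooth limit. Next, parametrize $D$ by a smooth map $H\colon[0,1]^2\to S$ so that each $\alpha_s(t):=H(s,t)$ is a loop based at $\alpha(0)$, with $\alpha_0$ the constant loop, $\alpha_1=\alpha$, and such that $D_s:=H([0,s]\times[0,1])$ sweeps out $D$ monotonically as $s$ grows from $0$ to $1$.

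Let $P_s\colon E_{\alpha(0)}\to E_{\alpha(0)}$ be the holonomy around $\alpha_s$, so $P_0=\mathrm{Id}$ and $P_1$ is the holonomy of interest. For a fixed $\xi\in E_{\alpha(0)}$, let $X(s,t)$ be the parallel transport of $\xi$ along $\alpha_s|_{[0,t]}$, so that $X(s,1)=P_s\xi$. Using the standard curvature commutation $[\nabla_{\partial_s},\nabla_{\partial_t}]=R(\partial_sH,\partial_tH)$ together with the fixed initial condition $X(s,0)=\xi$, integration in $t$ yields the variation formula
\begin{equation*}
\frac{d}{ds}(P_s\xi)=-\int_0^1\tau_{s,t}\,R(\partial_sH,\partial_tH)\,X(s,t)\,dt,
\end{equation*}
where $\tau_{s,t}$ denotes parallel transport along $\alpha_s$ from $H(s,t)$ back to $\alpha(0)$. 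Since the parallel transports are isometries of the fibers and $R$ is bounded by $\delta$, the integrand has norm at most $(k-1)\,\delta\,\|\xi\|\,|\partial_sH\wedge\partial_tH|$; the dimensional factor $(k-1)$ arises from converting the given bound on $R$ into an operator-norm bound for the skew-symmetric endomorphism $R(\partial_sH,\partial_tH)$ of $E$, via the estimate $\|A\|\leq(k-1)\max_{i,j}|A_{ij}|$ for $A\in\mathfrak{so}(k)$, which exploits the vanishing of the diagonal entries in an orthonormal frame through a row-sum estimate. Integrating in $s$ and using that $\int_0^1\!\int_0^1|\partial_sH\wedge\partial_tH|\,dt\,ds=\mathrm{Area}(D)$, the length of the curve $s\mapsto P_s\xi$ in the sphere of radius $\|\xi\|$ in $E_{\alpha(0)}$ is at most $(k-1)\,\delta\,\mathrm{Area}(D)\,\|\xi\|$. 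Since this length is an upper bound for the arc distance between $\xi$ and $P_1\xi$, the angle estimate follows.

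The main technical obstacle is the piecewise-smooth nature of $\alpha$: the smoothing of the corners must be arranged so that the approximating disks have areas converging to $\mathrm{Area}(D)$ while their boundaries converge to $\alpha$ strongly enough that the associated holonomies also converge; once this approximation is in place, the smooth-case estimate passes to the limit and yields the stated bound.
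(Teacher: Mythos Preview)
Your argument is correct, but it proceeds by a genuinely different route than the paper. The paper fixes a radial gauge: choosing polar coordinates on $D$, it parallel-transports an orthonormal frame $\xi_1=\xi,\xi_2,\dots,\xi_k$ radially from the center, so that the connection 1-forms $w_{ij}=\langle\nabla\xi_i,\xi_j\rangle$ vanish in the radial direction and hence satisfy $dw_{1i}=\langle R(\cdot,\cdot)\xi_1,\xi_i\rangle$. Writing the parallel transport of $\xi$ along $\alpha$ as $\sum_i a_i\xi_i$, one obtains $1-\langle\xi(0),\xi(1)\rangle=-\int a_1'\le\sum_{i=2}^k\int_\alpha|w_{1i}|$, and each term is bounded by $\delta\,\mathrm{Area}(D)$ via ordinary Stokes on triangular regions bounded by radii and arcs of $\alpha$, after splitting $\alpha$ according to the sign of $w_{1i}(\alpha')$. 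The factor $(k-1)$ thus comes from the sum over $i\ge2$, rather than from your row-sum bound on $\mathfrak{so}(k)$. The paper's approach is more elementary---no holonomy-variation formula, and piecewise-smooth $\alpha$ is handled directly without a smoothing step---while yours is cleaner conceptually, avoids the somewhat ad hoc sign-splitting, and in fact yields a slightly stronger conclusion (you bound the angle $\theta$ itself, whereas the gauge argument bounds only $1-\cos\theta$).
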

\proof
Let us consider polar coordinates on $D$ through a diffeomorphism with
a 2-disk. We can assume that $\xi$ is a unit vector and we complete
$\xi=\xi_1,\dots,\xi_k$ to an orthonormal basis of $E_{\alpha(0)}$. By
radially parallel transporting them first to $p$, and then radially to
all of $D$, we get an orthonormal basis, which we again denote by
$\xi_1,\dots,\xi_k$, defined on $D$. If we consider the connection
1-forms $w_{ij}(X)=\la\nabla_X\xi_i,\xi_j\ra$ on~$D$, then one easily
sees that $dw_{ij}=\la R_\nabla(\cdot,\cdot)\xi_i,\xi_j\ra$ since
$\dim D=2$ and $w_{ij}(Y)=0$ for the radial direction $Y$.

Let $\xi(t)=\sum_ia_i(t)\xi_i(\alpha(t))$ be the parallel transport of
$\xi$ along $\alpha$ between 0 and $t$. Then, since
$\nabla_{\alpha'}\xi=0$, we have
$a_1'=\la \xi_1,\xi\ra'= \la\nabla_{\alpha'}\xi_1,\xi\ra
=\la\nabla_{\alpha'}\xi_1,\sum_{i=1}^k a_i\xi_i\ra
=\sum_{i=2}^k a_i w_{1i}(\alpha')$.
Therefore, since $|a_i|\leq 1$ we obtain
$$
0\leq 1-\la\xi(0),\xi(1)\ra = - \int_0^1 \! a_1'=
-\sum_{i=2}^k\int_0^1\! a_i\,w_{1i}(\alpha')\leq\sum_{i=2}^k\int_\alpha|w_{1i}|.
$$
For each $i$, choose a partition of $\alpha$ into countable many
segments $\alpha_1,\alpha_2,\dots$ with $w_{1i}|_{\alpha_{2j}}\geq 0$
and $w_{1i}|_{\alpha_{2j+1}}\leq 0$. Then, $\alpha_j$ together with the
radial curves (along which $w_{1i}=0$) encloses a triangular region $T_j$
where we apply Stokes Theorem to get
$$
\int_\alpha|w_{1i}| = \sum_j \int_{T_{2j}}dw_{1i}-
\sum_j \int_{T_{2j+1}}dw_{1i} \leq
\int_D|\la R_\nabla(\cdot,\cdot)\xi_1,\xi_i\ra|\leq \delta Area(D).
$$
\vspace{-32pt\qed}
\vspace{5ex}

\smallskip

In the following lemmas we study the behavior of the nullity leaves and
BNL's in \mbox{$U_p\subset B_\e(p)$}. To do this, we will be able to
restrict the discussion to a single surface $S$ transversal to the
nullity leaves and BNL's near $p$ due to the following result;
see Figure 5.

\begin{lem}\label{surface}
For each $p\in\pw$ there exist a 2-plane $\tau\subset T_pM$,
a sufficiently small convex ball $B_\e(p)$, and a neighborhood
$U_p\subset B_\e(p)$ of $p$ such that the surface
$S:=\exp(\tau\cap B_\e(0_p))\subset B_\e(p)$ satisfies:
\begin{enumerate}[label=\alph*)\,]
\item $S$ intersects all nullity leaves and BNL's in $U_p$,
and does so transversely;
\item $W_i\cap S$ is connected and its closure is not contained in
$U_p\cap S$;
\item $U_p\cap S$ is diffeomorphic to an open disc.
\end{enumerate}
\end{lem}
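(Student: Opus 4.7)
The plan is to construct $\tau$, $U_p$, and $S$ in sequence; properties (a) and (c) fall out of the construction, while property (b) requires additional work. First I would argue that $\Gamma(p)$ is finite with at most $m(p)$ elements. By \pref{derham}, each local connected component $W_i$ of $W\cap U_p$ is itself a twisted cylinder carrying a parallel nullity distribution. I would show that this distribution has a unique limit $\mu_i\subset T_pM$ as $q\to p$ within $W_i$, so that $\Gamma_{W_i}(p)=\{\mu_i\}$ and $\Gamma(p)=\{\mu_1,\dots,\mu_{m(p)}\}$. Since for each $\mu_i$ the 2-planes transverse to $\mu_i$ form an open dense subset of $G(2,T_pM)$, intersecting $m(p)$ such sets produces a common transversal $\tau$.

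Next I would fix a convex geodesic ball $B_\eps(p)$ and set $S=\exp_p(\tau\cap B_\eps(0_p))$, a smoothly embedded 2-disc. By continuity of $\exp_p$, of $\Gamma$ on $W$, and of the limiting process defining BNLs, transversality at $p$ propagates to a neighborhood: there is $0<\eps'<\eps$ such that every nullity leaf through $B_{\eps'}(p)$, and every BNL at a boundary point in $B_{\eps'}(p)$, meets $S$ transversely within $B_{\eps'}(p)$. Since $W$ is dense, for small enough $\eps'$ every component of $W\cap B_{\eps'}(p)$ contains $p$ in its boundary, whence $U_p:=U_p(B_{\eps'}(p))=B_{\eps'}(p)$ with $W\cap U_p$ splitting into exactly $m(p)$ components $W_1,\dots,W_{m(p)}$. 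Shrinking the radius of $S$ once more so that $S\subset B_{\eps'}(p)$, we obtain $U_p\cap S=S$, diffeomorphic to an open disc. This yields (a) and (c).

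For (b), transversality implies $W_i\cap S$ is open in $S$. To see it is connected, given $q_1,q_2\in W_i\cap S$, \lref{e} provides a path $\gamma\subset W_i\cap B_\eps(p)$ joining them; using the product structure on the twisted cylinder $W_i$ from \pref{derham}, one slides $\gamma$ along nullity leaves to produce a path in $W_i\cap S$ from $q_1$ to $q_2$. For the non-containment claim, completeness of each nullity leaf in $W_i$ forces it to exit the compact ball $\overline{B_{\eps'}(p)}$, so by transversality the locus $\overline{W_i\cap S}$ must reach $\partial S$ and therefore cannot be contained in $U_p\cap S=S$.

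The most delicate step is the first, namely establishing that $\Gamma(p)$ is finite so that a common transversal $\tau$ exists. Since $\partial W$ carries no \emph{a priori} regularity, controlling the limiting behavior of the parallel nullity distribution on each twisted cylinder $W_i$ as $q\to p$ requires exploiting the global product structure from \pref{derham}, together with local finiteness and the completeness of nullity leaves. The leafwise sliding argument in (b) is also subtle, as $W_i$ can approach $\partial W$ irregularly and the sliding must remain within $W_i\cap U_p$.
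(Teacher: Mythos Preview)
Your plan has the logical order inverted, and this creates a genuine circularity. You begin by asserting that $\Gamma_{W_i}(p)$ is a singleton $\{\mu_i\}$ for each local component, but in the paper this is exactly the content of \lref{only1}, which is proved \emph{after} \lref{surface} and \emph{uses} the surface $S$ together with the curvature bound \lref{integral}. The product structure from \pref{derham} does not by itself force a parallel distribution on a twisted cylinder to have a unique limit at an irregular boundary point; that requires the holonomy estimate. The paper's proof avoids this trap: it picks an \emph{arbitrary} BNL $\mu\in\Gamma_{W_i}(p)$, takes $L=\exp(\mu^\perp\cap B_\e(0_p))$, and then uses \lref{integral} on closed curves in $L$ to show that the parallel transport of $\Gamma(q')$ to $p$ lies within $2\delta$ of $\mu$ for \emph{every} $q'$ in the arc-component of $W_i\cap L$. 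This is what gives uniform transversality on $W_i$, and it is also what drives the connectedness of $W_i\cap L$. Your appeal to ``continuity of $\Gamma$ on $W$ and of the limiting process defining BNLs'' is not a substitute: $\Gamma$ has no a priori continuous extension to $\overline W$, and knowing the limit along sequences $q\to p$ says nothing about $\Gamma(q)$ for $q\in W_i\cap U_p$ away from $p$ unless you already know uniqueness of the BNL.

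Two further points. Your argument for the non-containment clause in (b) is a non sequitur: a nullity leaf through $q\in W_i\cap S$ is $(n-2)$-dimensional and meets $S$ only at $q$, so its leaving the ball says nothing about $\overline{W_i\cap S}$ reaching $\partial S$. The paper instead first builds $S$ inside $B_\e(p)$, then shrinks to $\e'<\e$ chosen so small that each of the finitely many sets $W_i\cap S$ already contains points outside $B_{\e'}(p)$, and takes the new $U_p=U_p(B_{\e'}(p))$. Finally, your claim that $U_p$ can be taken equal to a ball $B_{\e'}(p)$ is unjustified; local finiteness only provides \emph{some} neighborhood $U'\subset B_{\e'}(p)$ with $m(p)$ components, and Figure~5 illustrates why $U_p$ is in general not a ball. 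The simple-connectedness of $U_p\cap S$ in part (c) then needs a separate argument, which the paper supplies via the second half of (b).
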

\begin{proof} For a fixed $1\leq i \leq m(p)$, choose some BNL
$\mu\in\Gamma_{W_i}(p)$ and for some convex ball $ B_\e(p)$ consider the
surface $L=\exp(\mu^\perp\cap B_\e(0_p))$. Given $\delta>0$, we will fix
$\e>0$ such that
$(n-1)\max\{|\Scal_M(x)|: x\in B_{\e}(p)\}Area(L)<\delta$ and choose
$U_p\subset B_\e(p)$. Notice also that a bound on $\Scal_M$ gives a bound
on the full curvature tensor since $M^n$ is CN2.
Take a sequence $p_i\in W_i$ converging to $p$ such that
$\Gamma(p_i)\to\mu$. For $i$ large enough $\Gamma(p_i)$ is transversal to
$L$ and we can thus assume that $p_i\in W_i\cap L$. Furthermore, fix $i$
large enough such that for $q:=p_i$ the parallel translate of $\Gamma(q)$
along the minimal geodesic $\overline{qp}$ from $q$ to $p$ has angle less
than $\delta$ with $\mu$. Let $W_i'$ be the arc-connected component of
$W_i\cap L$ that contains~$q$. Thus for any $q'\in W_i'$, we can choose a
curve $\alpha\subset W_i'$ connecting $q$ to $q'$. If
$\beta=\overline{pq}$ and $ \beta'=\overline{q'p}$, we form the closed
curve $\varphi=\beta*\alpha*\beta'\subset L$. We can also choose $\alpha$
such that $\varphi$ is simple and hence bounds a disc $D\subset L$.
According to \lref{integral} the parallel transport of $\mu$ along
$\varphi$ forms an angle less than $\delta$ with $\mu$. In other words,
the parallel transport of $\Gamma(q')$ along $\overline{q'p}$ has angle
less than $2\delta$ with $\mu$ for any $q'\in W_i'$. We can thus choose
$\e'<\e$ sufficiently small, and $U_p\subset B_{\e'}(p)$ such that all
nullity leaves in $W_i'$ intersect~$L$ transversely at an angle bounded
away from $0$. We now claim that this implies that $W_i'=W_i$, i.e.
$W_i\cap L$ is connected. Otherwise, there exists an $x\in W_i$ with
$x\in \pw_i'$. Since $\Gamma(x)$ still intersects $L$ transversely, we
can choose a small product neighborhood $U\subset W_i$ as in the proof of
\tref{derham} such that $x\in U$ and all nullity leaves in $U$
intersect~$L$ in a unique point and transversely. But then any two points
in $U\cap L$ can be connected in $U$ and then projected along nullity
leaves to lie in $L$. Thus $U\cap L$ is also contained in~$W_i'$.

Since there are only finitely many connected components, and all
components of $U_p\cap W$ contain $p$ in their boundary, there exists a
common $\e'$ and BNL's $\mu_i\in\Gamma_{W_i}(p)$ satisfying the above
properties. We can now choose a 2-plane $\tau\subset T_pM$ transversal to
all $\mu_i$ and set $S:=\exp(\tau\cap B_{\e}(0_p))$. Repeating the above
argument for this surface $S$, we see that $\e$ can be chosen
sufficiently small such that all nullity leaves in $U_p=U_p( B_{\e}(p))$
intersect $S$ transversely and that for all its connected
components $S\cap W_i$ is connected as well. Since in addition we can
assume that the angle between the nullity lines and $S$ is bounded away
from $0$, all BNL's are transversal to $S$ as well. Notice also that now
the components of $W\cap U_p\cap S$ are precisely $W_i\cap S$.

So far $S$ and $U_p$ satisfy the properties in part $(a)$ and the first
part of $(b)$. For the second part of $(b)$, since $W$ is locally finite,
we simply choose $\e'<\e$ small enough such that the closure of the
connected components $W_i\cap S$ are not strictly contained in
$B_{\e'}(p)$. But then $U_p(B_{\e'}(p))$ is the desired neighborhood
(see Figure 5).

We now claim that such a neighborhood is also simply connected. Let
$\alpha$ be a closed curve in $U_p$ which bounds a disc $D\subset S$. If
$D$ contains a point in another component $W'$ of $W\cap S$, then $W'$ is
fully contained inside $D$ since it does not touch $\alpha$. Hence the
closure of $W'$ is contained in $U_p$, which contradicts $(b)$. Thus $D$
is contained in $U_p$, and we can find a null homotopy of $\alpha$ in
$D\subset U_p$.
\end{proof}
\vspace{0.5ex}

\begin{figure}[!ht]
\centering
\includegraphics[width=0.375\textwidth]{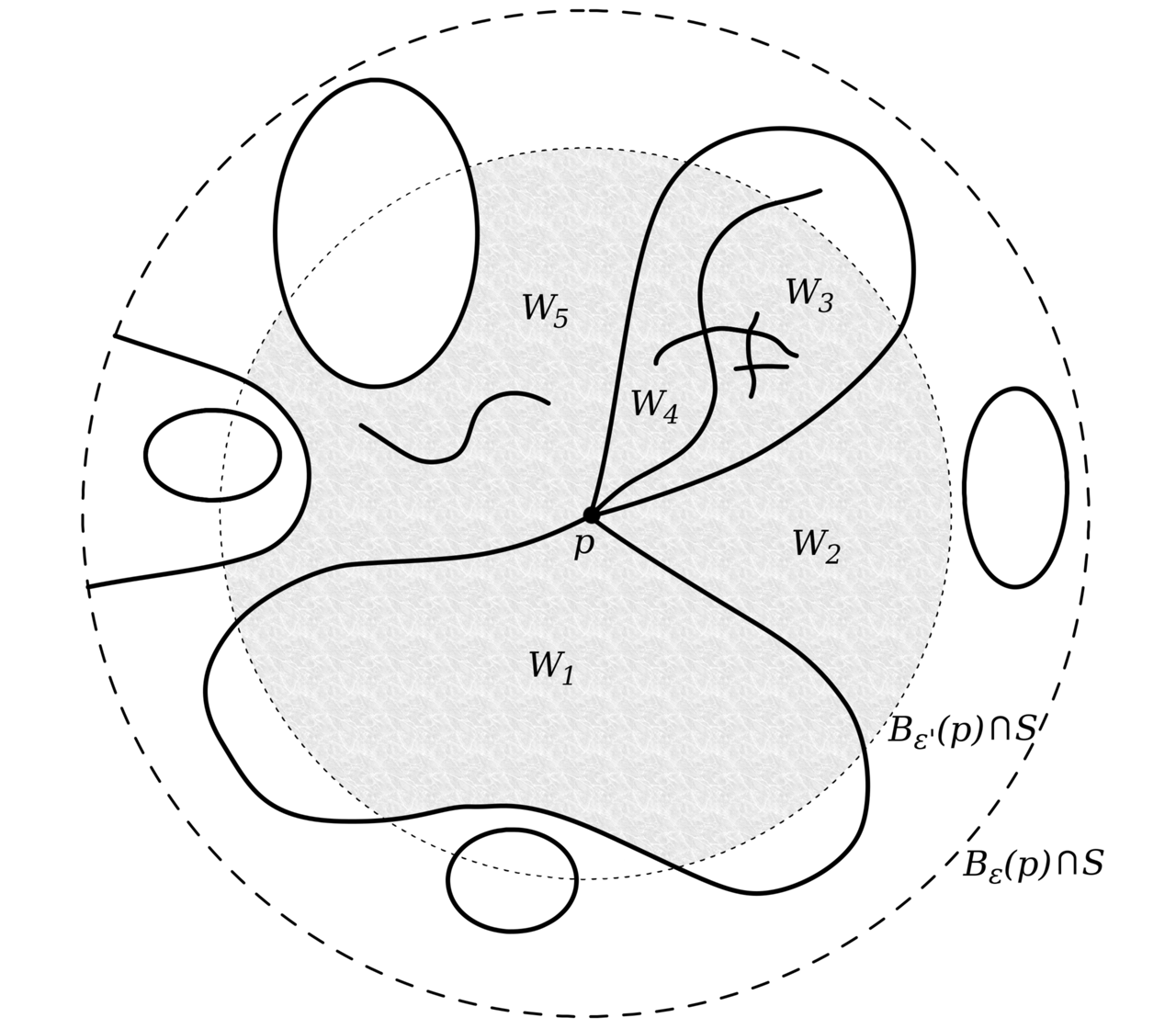}
\caption{\small A point $p\in\pw$ with $m(p)=5$,
the dark lines represent $\pw\cap S$,}
\centerline{\small while the shaded area corresponds to $U_p\cap S$}
\end{figure}

We will use $\e>0$, $S$, and $U_p$ as in \lref{surface} for the remainder
of this section. We point out though that the open sets $W_i\cap S$ can
have quite complicated boundary. In fact, $\pw_i$ may not even be a
Jordan curve and hence may not consist of a union of continuous arcs.
Furthermore, for $p\in\pw_i$ there may not even be a continuous curve in
$W_i$ with endpoint~$p$. We thus carefully avoid using any such
assumptions on properties of these boundaries.

It already follows from the proof of \lref{surface} that
all BNL's in $\Gamma_{W_i}(p)$ form a small angle. We will now show that
it is in fact unique.

\begin{lem}\label{only1}
If $W_i$ are the local connected components at $p$, then
$\Gamma_{W_i}(p)$ is a single BNL, for each $1\leq i \leq m$.
\end{lem}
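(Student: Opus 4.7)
The plan is to argue by contradiction. Suppose $\mu_1\neq\mu_2\in\Gamma_{W_i}(p)$ are two distinct BNL's, and set $\theta:=\angle(\mu_1,\mu_2)>0$ for the angle between them as 2-planes in $T_pM$. By definition of BNL, there exist sequences $q_j,q_j'\in W_i$ converging to $p$ such that, under parallel transport along the radial geodesics $\beta_j=\overline{pq_j}$ and $\beta_j'=\overline{pq_j'}$, the nullity subspaces $\Gamma(q_j)$ and $\Gamma(q_j')$ converge in $T_pM$ to $\mu_1$ and $\mu_2$ respectively. Since nullity leaves cross $S$ transversely by \lref{surface}(a) and $\Gamma$ is parallel along its own flat complete leaves, we may slide each $q_j,q_j'$ along its nullity leaf to hit $W_i\cap S$ without changing these limit subspaces. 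So we assume $q_j,q_j'\in W_i\cap S$, and then $\beta_j,\beta_j'\subset S$ as well, because $S=\exp_p(\tau\cap B_\e(0_p))$ is foliated by geodesics through $p$ with initial direction in $\tau$.

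By \lref{surface}(b), $W_i\cap S$ is connected, and being open in the surface $S$ it is path-connected; hence we may join $q_j$ to $q_j'$ by a piecewise smooth curve $\alpha_j\subset W_i\cap S$. After a generic transversality perturbation of $\alpha_j$ within $W_i\cap S$ we may arrange that the closed curve $\varphi_j:=\beta_j*\alpha_j*(\beta_j')^{-1}$ is a simple loop based at $p$. By \lref{surface}(c), $U_p\cap S$ is a topological disk, so the Jordan curve theorem gives an embedded disk $D_j\subset U_p\cap S$ with $\partial D_j=\varphi_j$. Applying \lref{integral} to $TM|_{D_j}$, the $\nabla$-holonomy around $\varphi_j$ rotates every vector of $T_pM$ by at most
$$
(n-1)\,\kappa\,\mathrm{Area}(D_j)\;\leq\;(n-1)\,\kappa\,\mathrm{Area}(U_p\cap S),
$$
where $\kappa$ is a uniform bound on the curvature tensor of $M^n$ on the convex ball $B_\e(p)$.

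On the other hand, this very holonomy carries $\mu_1$ to a subspace arbitrarily close to $\mu_2$ as $j\to\infty$: parallel transport along $\beta_j$ sends $\mu_1$ to a subspace converging to $\Gamma(q_j)$; along $\alpha_j\subset W_i$, parallelism of $\Gamma$ on $W$ carries $\Gamma(q_j)$ to $\Gamma(q_j')$; and along $(\beta_j')^{-1}$, $\Gamma(q_j')$ is sent to a subspace converging to $\mu_2$. In the limit we obtain $\theta\leq(n-1)\,\kappa\,\mathrm{Area}(U_p\cap S)$. Now exploit the freedom to shrink the neighborhood: replacing $U_p$ by $U_p(B_{\e'}(p))\subset\overline{B_{\e'}(p)}$ and letting $\e'\to 0$, the right hand side tends to $0$ while $\kappa$ stays bounded on $B_\e(p)$, contradicting $\theta>0$.

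The main technical obstacle is arranging $\varphi_j$ to be a simple loop so that \lref{integral} applies to an embedded disk $D_j$; this is handled by a standard transversality perturbation of $\alpha_j$ inside $W_i\cap S$, in the same spirit as the simple-loop construction already used in the proof of \lref{surface}. The Grassmannian continuity of parallel-transported subspaces needed for the holonomy identification is routine once one applies the vectorial bound of \lref{integral} to a basis of $\mu_1$.
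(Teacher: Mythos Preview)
Your argument is correct and follows essentially the same route as the paper: pick sequences in $W_i\cap S$ realizing the two BNL's, connect them inside $W_i\cap S$, close up with radial geodesics in $S$, bound the holonomy via \lref{integral} over the enclosed disk, and shrink. The only noteworthy difference is cosmetic: the paper sends the \emph{curvature} factor to zero (using that $p\in\partial W$ is a flat point, so $s(\e')=\max_{B_{\e'}(p)}|\Scal_M|\to 0$), whereas you keep a fixed curvature bound $\kappa$ and send the \emph{area} to zero by shrinking $U_p$.

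One small point to tighten: when you replace $U_p$ by $U_p(B_{\e'}(p))$ you implicitly assume that the connecting curves $\alpha_j$ can be rechosen inside $W_i\cap S\cap B_{\e'}(p)$; otherwise the loop $\varphi_j$ need not lie in the smaller disk and your area bound does not shrink. This is precisely what \lref{e} provides (choose $\delta(\e',p)$ so that points of $W_i\cap B_\delta(p)$ can be joined within $W_i\cap B_{\e'}(p)$, then project to $S$ along nullity leaves), and the paper invokes it explicitly; you should cite it as well.
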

\begin{proof}
Let $\mu_1,\mu_2\in \Gamma_{W_i}(p)$ be two BNL's at $p$ and two
sequences $p_{r,k}\in W_i\cap S$, $r=1,2$, converging to $p$ with
$\Gamma(p_{r,k})\to\mu_r$. For any $\e'>0$ choose $0<\delta(\e',p)<\e'$
as in \lref{e}. For $k$ large enough $p_{r,k}\in B_{\delta'}(p)$ and we
can choose a curve $\alpha_k\subset W_i\cap B_{\e'}(p)$ connecting
$p_{1,k}$ to $p_{2,k}$ and by \lref{surface} we can also assume that
$\alpha_k$ lies in $S$. Now define the loop
$\varphi_k=\beta_{2,k}*\alpha_k*\beta_{1,k}^{-1}\subset S \cap
B_{\e'}(p)$, where $\beta_{r,k}=\overline{p_{r,k}p}$. We can assume it is
a simple closed curve and hence encloses a 2-disk
$D\subset S\cap B_{\e'}(p)$. Therefore, \lref{integral} implies that the
angle between $\mu_1$ and its parallel transport along $\varphi_k$ is
bounded by $(n-1)Area(S)s(\e')$, where
$s({\e'}):=\max\{|\Scal_M(x)|: x\in B_{\e'}(p)\}$. On the other hand, the
parallel transport of $\Gamma(p_{r,k})$ along $\beta_{r,k}$ converges to
$\mu_r$ as $k\to \infty$ and the parallel transport of $\Gamma(p_{1,k})$
along $\alpha_k$ is equal to $\Gamma(p_{2,k})$. Hence the angle between
$\mu_2$ and the parallel transport of $\mu_1$ along $\varphi_k$ goes to
$0$ as $k\to\infty$. Finally, $s({\e'})\to 0$ as ${\e'}\to 0$ since
$\Scal_M(p)=0$ and we conclude that $\mu_1=\mu_2$ as $\e'\to 0$.
\end{proof}

\begin{lem}\label{constant}
For $q\in \pw_i\cap\pw_j\cap S$, both $\Gamma_{W_i}(q)$ and
$\Gamma_{W_j}(q)$ also contain a unique BNL, and the angle between them
coincides with the angle between $\Gamma_{W_i}(p)$ and $\Gamma_{W_j}(p)$.
\end{lem}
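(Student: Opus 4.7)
My plan is to first establish that both $\Gamma_{W_i}(q)$ and $\Gamma_{W_j}(q)$ contain exactly one BNL by rerunning the machinery of \lref{surface} and \lref{only1} at the new basepoint $q$, and then to deduce the angle identity from a parallel-transport argument in the surface $S$ controlled by \lref{integral}.

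Since $q \in \pw_i \subset \pw$, I would apply \lref{surface} at $q$ to obtain a transversal surface $S_q$ and local connected components $V_1^q, \ldots, V_{m(q)}^q$ of $W$ at $q$, and then \lref{only1} at $q$ to produce, for each $l$, a unique BNL $\mu_l^q \in \Gamma_{V_l^q}(q)$. Thus $\Gamma_{W_i}(q) \subset \{\mu_l^q : V_l^q \subset W_i\}$ is a finite set, and likewise for $W_j$. To show it is a singleton, I would suppose two components $V_l^q, V_{l'}^q \subset W_i$ yield distinct BNLs and pick sequences $y_k \in V_l^q \cap S$, $z_k \in V_{l'}^q \cap S$ converging to $q$ with $\Gamma(y_k) \to \mu_l^q$ and $\Gamma(z_k) \to \mu_{l'}^q$. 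Using connectedness of $W_i \cap S$ from \lref{surface}(b), join them by curves $\alpha_k \subset W_i \cap S$; by parallelism of $\Gamma$ on $W$, $P_{\alpha_k}(\Gamma(y_k)) = \Gamma(z_k)$. Compare this with the short path $\beta_k = \overline{y_k q} \ast \overline{q z_k}$ through $q$ in $S$: the loop $\alpha_k \ast \beta_k^{-1}$ bounds a disc in $S$, so \lref{integral} bounds the discrepancy of the two parallel transports by $(n-1)\max_S|\Scal_M|\cdot\mathrm{Area}(S) < \delta$, the small constant fixed at the outset of \lref{surface}. As $k \to \infty$ the curve $\beta_k$ collapses to $q$ and $P_{\beta_k}(\Gamma(y_k)) \to \mu_l^q$ while $\Gamma(z_k) \to \mu_{l'}^q$, giving $|\mu_l^q - \mu_{l'}^q| \le \delta$. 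Choosing $\delta$ smaller than the minimum pairwise distance among the finitely many distinct $\mu_l^q$ forces $\mu_l^q = \mu_{l'}^q$, establishing $\Gamma_{W_i}(q) = \{\mu_i^q\}$ and $\Gamma_{W_j}(q) = \{\mu_j^q\}$.

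For the angle identity, I would pick sequences $x_{i,k}, y_{i,k} \in W_i \cap S$ with $x_{i,k} \to p$, $y_{i,k} \to q$, and join them by curves $\gamma_{i,k} \subset W_i \cap S$. Parallelism gives $P_{\gamma_{i,k}}(\Gamma(x_{i,k})) = \Gamma(y_{i,k})$; passing to a Hausdorff limit yields a path $\gamma_i \subset \overline{W_i \cap S}$ from $p$ to $q$ with $P_{\gamma_i}(\mu_i^p) = \mu_i^q$, and analogously a path $\gamma_j \subset \overline{W_j \cap S}$ with $P_{\gamma_j}(\mu_j^p) = \mu_j^q$. Setting $F := P_{\gamma_j}^{-1} P_{\gamma_i}$ and using that parallel transport is orthogonal,
\[
\angle(\mu_i^q, \mu_j^q) = \angle(P_{\gamma_i}\mu_i^p, P_{\gamma_j}\mu_j^p) = \angle(F\mu_i^p, \mu_j^p).
\]
Since $F$ is the holonomy around the loop $\gamma_i \ast \gamma_j^{-1}$, which bounds a disc in $S$, \lref{integral} gives $|F - I| < \delta$, whence $|\angle(\mu_i^q, \mu_j^q) - \angle(\mu_i^p, \mu_j^p)| = O(\delta)$.

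The main obstacle will be the passage from this approximate equality to the exact identity claimed in the lemma, since $\delta$ is bounded below by a quantity depending on $d(p,q)$ and the scalar curvature between $p$ and $q$. I would overcome this as in the uniqueness step: the pairwise angles formed by the finite collections of candidate BNLs at $p$ and at $q$ (given by \lref{only1}) constitute a finite set with a positive minimum nonzero gap. Choosing $\delta$ in the initial setup of \lref{surface} smaller than both the minimum BNL gap used in the previous step and this minimum angle gap collapses the $O(\delta)$ approximation to an exact identity. Executing this discrete-gap argument carefully---in particular, verifying that the single $\delta$ fixed in \lref{surface} can be chosen strictly less than all the relevant finite configurations of BNLs and angles simultaneously over all boundary points $q$ under consideration---is the principal technical point that needs to be worked out.
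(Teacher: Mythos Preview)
Your discrete-gap strategy for the angle identity does not work. You show
\[
\bigl|\angle(\mu_i^q,\mu_j^q)-\angle(\mu_i^p,\mu_j^p)\bigr|=O(\delta),
\]
and then want to argue that since the angles at $p$ (resp.\ at $q$) come from a finite set of BNL's, choosing $\delta$ below a ``minimum gap'' forces equality. But there is no common discrete set containing both numbers: the possible angles at $q$ are determined by the BNL configuration at $q$, which has nothing a priori to do with the configuration at $p$. Two specific real numbers being $\delta$-close for some fixed $\delta>0$ simply does not make them equal. The same objection applies, in weaker form, to your uniqueness argument: the minimum pairwise distance among the $\mu_l^q$ depends on $q$, and you would need a uniform positive lower bound over all $q\in\pw_i\cap\pw_j\cap S$, which you have no way of securing in advance. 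The $\delta$ in \lref{surface} is fixed once $S$ is chosen; it is bounded below by $(n-1)\max_S|\Scal_M|\cdot\mathrm{Area}(S)$, and since $S$ generally contains nonflat points of $W_i$, this is a fixed positive number you cannot shrink.

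What the paper does instead is arrange for the error in \lref{integral} to be \emph{genuinely} small, not just bounded by a fixed constant. For each auxiliary $\delta>0$ it works inside the sublevel set $M_\delta=\{|\Scal_M|<\delta\}$, which contains $\pw$. The high-curvature part $\{|\Scal_M|\ge\delta\}\cap U_p$ sits in finitely many closed discs $D_\ell$ (taking $\pm\delta$ regular); the connecting curves $\alpha_{r,k}\subset W_r$ are then rerouted to contour around each $D_\ell$ so that the disc bounded by the resulting loop lies entirely in $M_\delta$. Now \lref{integral} gives a holonomy bound of order $\delta\cdot\mathrm{Area}(S)$, and letting $\delta\to 0$ yields the exact angle identity directly, with no discreteness needed. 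Uniqueness of $\Gamma_{W_r}(q)$ is then obtained as the special case $i=j$. Your Hausdorff-limit step is also unjustified (no control on lengths or on convergence of parallel transports), but this is secondary: the essential missing idea is the $M_\delta$ contouring that lets $\delta\to 0$.
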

\proof
Fix $\delta>0$ and let $M_\delta=\{x\in M^n: |\Scal_M(x)|<\delta\}$.
Then $p\in\pw\subset V\subset M_\delta$. Choosing $U_p$ and~$S$ as in
\lref{surface} we can study $\Gamma$ in $U_p$ in terms of its
intersection with~$S$, and in the following drop $S$ for clarity. In
addition, assume that $\pm\delta$ are regular values of $\Scal_M$
restricted to $S$, and observe that $M_\delta\cap U_p$ is an open
neighborhood of $\pw_i\cap\pw_j$. We denote by $r$ either $i$ or $j$.

Since $\pm\delta$ are regular values, the set
$\{|\Scal_M|\ge\delta\}\cap U_p$ is contained in the union of finitely
many closed disjoint 2-discs (or half disks)~$D_\ell$. If we remove from
$W_r$ those discs $D_\ell$ which are contained in it, we obtain the open
set $W_r'\subset W_r$, which is connected since~$W_r$~is.
Let $\mu_r\in\Gamma_{W_r}(q)$ be a BNL and set $\nu_r=\Gamma_{W_r}(p)$,
which contains only one element by \lref{only1}. Choose two sequences
$p_{r,k},q_{r,k}\in W_r'$ such that $p_{r,k}\to p$, $q_{r,k}\to q$, with
$\Gamma(q_{r,k})\to \mu_r$ and $\Gamma(p_{r,k})\to\nu_r$. Choose smooth
simple curves $\alpha_{r,k}\subset W_r'$ joining $p_{r,k}$ to $q_{r,k}$,
and let $\beta_{r,k}=\overline{p_{r,k}p}$ and
$\gamma_{r,k}=\overline{q_{r,k}q}$, which, since $\partial M_\delta$ has
positive distance to $p$ and $q$, we can assume to lie in $M_\delta$ for
$k$ sufficiently large. Thus we get two curves
$\varphi_{r,k}=\beta_{r,k}^{-1}* \alpha_{r,k}*\gamma_{r,k}$ from $p$ to
$q$, and hence a closed curve
$\varphi_k:=\varphi_{j,k}^{-1}*\varphi_{i,k} \subset M_\delta\cap U_p$,
which we can also assume to be simple. By part $(c)$ in \lref{surface},
$\varphi_k$ bounds a 2-disk $D\subset U_p$.

We claim that $\alpha_{r,k}\subset M_\delta\cap W_r$ can be modified in
such a way that $D\subset M_\delta\cap U_p$. First observe that any
closed disc $D_\ell\subset D$ as above must be contained in either $W_i$
or $W_j$ since $\partial D_\ell\cap \varphi_k=\emptyset$ and no component
has its closure contained in $U_p$. For each $D_\ell\subset W_r$, by
means of a smooth curve $\phi_\ell\subset W_r'\cap D$ connecting the
boundary of $D_\ell$ with a point $y_\ell$ in~$\alpha_{r,k}$ we can
contour $D_\ell$ from the interior of $D$ by following $\alpha_{r,k}$ up
to $y_\ell$, $\phi_\ell*\partial D_\ell*\phi_\ell^{-1}$, and the
remaining part of $\alpha_{r,k}$. We can repeat this procedure for each
$D_\ell$ and can also arrange this in such a way that all curves
$\phi_\ell$ are disjoint. Observe that this new curve, that we still call
$\alpha_{r,k}$, is contained in $W_r\cap M_\delta$, and the claim is
proved (see Figure 6).

\begin{figure}[!ht]
\centering
\includegraphics[width=0.5\textwidth]{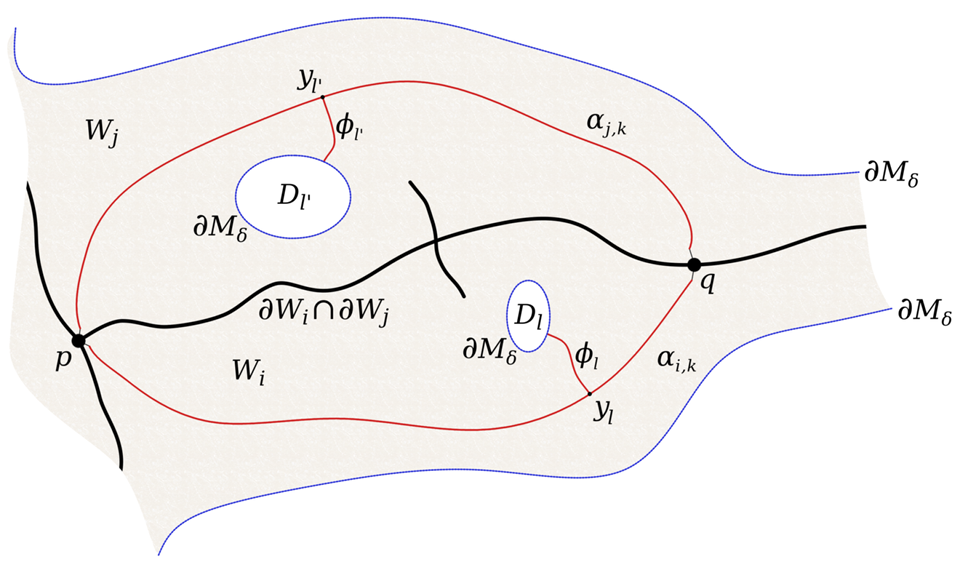}
\caption{\small A neighborhood of $\pw_i\cap\pw_j$ in $S$ with the shaded area representing $M_\delta$}
\end{figure}

As $k\to\infty$, the parallel transport of $\nu_r$ along $\varphi_{r,k}$
approaches $\mu_r$ since $\Gamma$ is parallel in~$W_r$. By
\lref{integral}, the angle between the parallel transport of $\nu_i$
along $\varphi_{i,k}$ and along $\varphi_{j,k}$ can be bounded by
$(n-1)\delta Area(S)$. Since the angle between $\nu_i$ and $\nu_j$ and
their parallel transport along $\nu_j$ is the same, the claim follows by
taking $\delta\to 0$.

Finally, assume that there are two BNL's in $\Gamma_{W_r}(q)$.
We can repeat the above argument with curves lying only in $W_r$
since we did not assume that $i\neq j$, and it follows from \lref{only1}
that the angle between them is $0$.
\qed
\begin{lem}\label{nc}
The distribution $\Gamma$ does not extend continuously to any
neighborhood of any $p\in\pw$.
\end{lem}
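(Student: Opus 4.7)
The plan is to argue by contradiction against maximality of $W$. Suppose that $\Gamma$ extends continuously to a rank-$(n-2)$ distribution $\tilde\Gamma$ on some open neighborhood $N$ of a point $p\in\pw$. I will aim to show that $(W\cup N,\tilde\Gamma)$ is itself a full extension of $V$, strictly larger than $W$, contradicting maximality.

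The first step is to use continuity of $\tilde\Gamma$ to collapse all BNL's at every $q\in N\cap\pw$ to a single subspace, namely $\tilde\Gamma(q)$. Since $W$ is dense, $W\cap N$ is dense in $N$, so every BNL at $q$ arises as $\lim_k\Gamma(q_k)$ for a sequence $q_k\in W\cap N$ with $q_k\to q$; continuity forces $\Gamma_{W_i}(q)=\tilde\Gamma(q)$ for each local connected component $W_i$ of $W$ at $q$. In particular, the local picture at $p$ agrees with the conclusion of \lref{only1}, and more: all the distinct BNL's at $p$ coming from distinct $W_i$'s are forced to coincide.

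The second, and main, technical step is to show $\tilde\Gamma$ is smooth and parallel on $N$. I would verify that $\tilde\Gamma$ is invariant under parallel transport along every smooth curve $\gamma\subset N$ by approximating $\gamma$ by smooth curves $\gamma_k\subset W\cap N$ (away from endpoints), using density of $W\cap N$ in $N$, and passing to the limit via continuous dependence of parallel transport on $C^0$ curves. On each $\gamma_k$, parallel transport preserves $\Gamma$ because $\Gamma$ is parallel on $W$, and continuity of $\tilde\Gamma$ carries this invariance to $\gamma$. Once $\tilde\Gamma$ is parallel-transport invariant, smoothness is automatic, since locally $\tilde\Gamma(q)$ is the parallel transport of a fixed smooth subspace along smoothly varying curves.

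The final step is to verify that the leaves of $\tilde\Gamma$ on $N$ are flat and complete, so that $W\cup N$ genuinely is a full extension. Completeness follows from completeness of $M^n$ and parallelism, since the leaf through $q$ is $\exp_q(\tilde\Gamma(q))$. For flatness, any point $q'$ on such a leaf $L$ either lies in $V$, in which case $T_{q'}L=\tilde\Gamma(q')=\Gamma(q')\subset\ker R(q')$, or lies in the set of flat points of $M^n$, in which case $\ker R(q')=T_{q'}M$ trivially; in either case the Gauss equation for the totally geodesic $L$ gives vanishing intrinsic curvature at $q'$. Local finiteness of $W\cup N$ follows since enlarging $W$ can only merge local connected components at each boundary point. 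The main obstacle is the parallel-transport step: care is needed both to construct the approximating curves $\gamma_k$ using only density (rather than any regularity) of $\pw$, and to pass to the limit rigorously to conclude that the continuous extension $\tilde\Gamma$ inherits parallelism, from which smoothness, flat complete leaves, and the contradiction with maximality all follow.
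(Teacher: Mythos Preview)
Your overall strategy of contradicting the maximality of $W$ is correct, and steps~1 and~3 are fine. The genuine gap is step~2: density of $W\cap N$ in $N$ does \emph{not} let you approximate an arbitrary smooth curve $\gamma\subset N$ by smooth curves $\gamma_k\subset W\cap N$. Density approximates points, not curves. Concretely, an open dense subset of a ball need not be path-connected, so a curve joining points in different local connected components of $W\cap U_p$ admits no $C^0$-nearby curve lying in $W\cap N$; and a curve lying along an arc of $\pw$ (such arcs certainly exist --- this is exactly what \tref{mainfinite} eventually establishes) satisfies $\gamma^{-1}(W)=\emptyset$, so your argument cannot even begin there. Passing from ``$\Gamma$ parallel on the dense open set $W$'' to ``$\tilde\Gamma$ parallel on $N$'' requires controlling parallel transport \emph{across} $\pw$, and density alone provides no such control. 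The obstacle you flag as the ``main'' one is not merely delicate; along the route you propose it cannot be overcome.

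The paper's proof reverses the logic. Rather than proving that the given continuous extension is parallel, it first \emph{constructs} a manifestly smooth distribution $\hat\Gamma$ on $B_\delta(p)$ by radially parallel-transporting the single subspace $\tilde\Gamma(p)$ along geodesics from $p$. It then shows $\hat\Gamma=\Gamma$ on $W$ near $p$, working on the transversal surface $S$ of \lref{surface}: along each radial geodesic in $S$ the angle between $\Gamma$ and $\hat\Gamma$ is locally constant on each of the finitely many closed pieces $\gamma\cap\overline{W_i}$ (via an argument in the spirit of \lref{constant}, ultimately resting on the curvature-integral estimate of \lref{integral}), and the continuity hypothesis makes this angle function continuous on all of $\gamma$, hence identically zero since it vanishes at $p$. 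Thus $\Gamma$ extends smoothly near $S$, and \lref{4} then propagates this along complete leaves to contradict maximality. The essential difference is that the paper compares two distributions across the finitely many pieces $\overline{W_i}$ using local finiteness and the curvature bound, rather than attempting to stay inside $W$.
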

\proof
Suppose that $\Gamma$ extends continuously to $B_\delta(p)$. Let
$\hat\Gamma$ be the smooth distribution in $B_\delta(p)$ obtained by
parallel transporting $\Gamma(p)$ along geodesics emanating from $p$.
Choosing a surface $S$ centered at $p$ as in \lref{surface}, and
contained in $B_\delta(p)$, we first claim that on $S$ the
distribution~$\Gamma$ agrees with $\hat\Gamma$. To see this, let
$\alpha\subset S$ be a geodesic stating at $p$, and consider the angle
function $\alpha(t)$ between $\Gamma(\gamma(t))$ and
$\hat\Gamma(\gamma(t))$. An argument similar to the one in the proof of
\lref{constant} shows that $\alpha$ is locally constant along the
finitely many (not necessarily connected) sets
$\gamma\cap\overline{W_i}$. Since by hypothesis $\alpha$ is continuous
with $\alpha(0)=0$, we conclude that $\alpha=0$, as desired. Since the
leaves of both $ \Gamma$ and $\hat \Gamma$ intersect $S$ transversely,
they must agree in a neighborhood of $S$ on which $\Gamma$ is thus
smooth. By \lref{4}, this property also holds on the union of all
complete leaves going through $S$, which contradicts the maximality
of $W$.
\qed

\begin{lem}\label{noc}
For every $p\in\pw$ we have that $\, 2\leq\#\Gamma(p)\leq m(p)$.
\end{lem}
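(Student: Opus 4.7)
The upper bound $\#\Gamma(p)\leq m(p)$ is immediate: by \lref{only1}, each $\Gamma_{W_i}(p)$ consists of a single BNL, and since $\Gamma(p)=\bigcup_{i=1}^{m(p)}\Gamma_{W_i}(p)$, there are at most $m(p)$ distinct elements.

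For the lower bound $\#\Gamma(p)\geq 2$, I would argue by contradiction. Suppose $\#\Gamma(p)=1$, so $\Gamma_{W_i}(p)=\mu$ for every $i$, a common BNL. The plan is to produce a continuous (in fact smooth) extension of $\Gamma$ to some neighborhood $U_p$ of $p$, contradicting \lref{nc}. Using \lref{surface}, fix a transversal surface $S$ and neighborhood $U_p$. Define a smooth rank $n-2$ distribution $\hat\Gamma$ on $U_p$ by parallel transporting $T_p\mu$ along geodesics emanating from $p$; then $\hat\Gamma(p)=T_p\mu$, and $\hat\Gamma$ is parallel along every radial geodesic from $p$.

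Set $\alpha(q)=\angle(\Gamma(q),\hat\Gamma(q))$ for $q\in W\cap U_p$. Since $\Gamma$ is parallel on $W$ and $\hat\Gamma$ is parallel along every radial geodesic $\gamma_v$ from $p$, both are parallel along $\gamma_v\cap W_i$, so $\alpha$ is constant on each connected component of $\gamma_v\cap W_i$. The hypothesis $\Gamma_{W_i}(p)=\mu$ gives $\Gamma(q_n)\to T_p\mu$ for any sequence $q_n\in W_i$ with $q_n\to p$, while $\hat\Gamma(q_n)\to T_p\mu$ by continuity of $\hat\Gamma$; hence $\alpha(q_n)\to 0$. Therefore $\alpha\equiv 0$ on every connected component of $\gamma_v\cap W_i$ whose closure contains $p$.

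To propagate $\alpha\equiv 0$ to all of $W\cap U_p$, trace an arbitrary radial geodesic $\gamma_{pq}$ through the finite sequence of local components $W_{k_1},\dots,W_{k_r}$ it traverses. At each transverse boundary crossing $x\in\pw_{k_s}\cap\pw_{k_{s+1}}$, \lref{constant} together with $\#\Gamma(p)=1$ (which forces the angle between $\Gamma_{W_{k_s}}(p)$ and $\Gamma_{W_{k_{s+1}}}(p)$ to be zero) yields $\Gamma_{W_{k_s}}(x)=\Gamma_{W_{k_{s+1}}}(x)$; combined with continuity of $\hat\Gamma$ at $x$ and $\alpha\equiv 0$ on the incoming component, this forces $\alpha\equiv 0$ on the outgoing component. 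Iterating gives $\alpha(q)=0$, so $\hat\Gamma=\Gamma$ on $W\cap U_p$ and $\hat\Gamma$ is a smooth extension of $\Gamma$, contradicting \lref{nc}. The main obstacle is handling radial geodesics whose crossings with $\pw$ are not of two-index type or are tangential; these are addressed by a small perturbation of the direction $v$, using density of $W$ and constancy of $\alpha$ on each radial component.
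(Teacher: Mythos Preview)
Your upper bound is correct and matches the paper. For the lower bound your overall strategy is the right one---assume $\#\Gamma(p)=1$, manufacture a continuous extension of $\Gamma$ near $p$, and contradict \lref{nc}---and it is essentially the paper's strategy too. But the geodesic-tracing step has a genuine gap.

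You assert that a radial geodesic $\gamma_{pq}$ traverses a \emph{finite} sequence of local components with crossings at points $x\in\pw_{k_s}\cap\pw_{k_{s+1}}$. Nothing guarantees this. As the paper stresses just after \lref{surface}, $\pw_i$ need not be a Jordan curve or a union of arcs; $\gamma_{pq}\cap W$ may have infinitely many components and $\gamma_{pq}\cap\pw$ may be Cantor-like, with no well-defined two-index crossings at all. Your proposed fix---perturb the direction $v$---would require that for a dense set of directions the radial geodesic meets $\pw$ in finitely many clean two-index points, and no regularity of $\pw$ is available to justify such a genericity claim. So the induction that propagates $\alpha\equiv 0$ outward cannot get started in general.

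The paper sidesteps this entirely by first proving a structural fact you omit: every $q\in\pw\cap S$ already lies in $\pw_i\cap\pw_j$ for some $i\neq j$. (If not, some $q$ has a neighborhood contained in a single $\overline{W_i}$; then $\Gamma_{W_i}(r)$ is a unique BNL for all nearby boundary points $r$, $\Gamma$ is continuous there, and \lref{nc} applied at $q$---not at $p$---gives a contradiction.) Once this is known, the hypothesis $\#\Gamma(p)=1$ together with \lref{constant} forces $\Gamma_{W_i}(q)=\Gamma_{W_j}(q)$ at every such $q$, hence $\#\Gamma(q)=1$ on all of $\pw\cap S$, and $\Gamma$ is continuous on $U_p$ directly---no radial tracing, no perturbation. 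The piece your argument is missing is exactly this preliminary step; with it, the propagation you want becomes a one-line application of \lref{constant} rather than an inductive walk along a geodesic.
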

\proof
First, observe that \lref{constant} for $i=j$ shows that
$\Gamma_{W_i}(q)$ contains a unique BNL for all $q\in\pw_i\cap S$.
We claim that this implies that
$\pw\cap S = \bigcup_{i\neq j}(\pw_i\cap\pw_j\cap S)$. If not, there
exists a local component $W_i$, $q\in \pw_i\cap S$ and a neighborhood $U$
of $q$ such that $U\subset \overline{W_i}$ and
$U\cap \overline{W_j}=\emptyset$ for $j\ne i$ (see Figure 5). But then
for every $r\in U\cap\pw\cap S\subset U\cap\pw_i\cap S$ we have that
$\Gamma_{W_i}(r)$ contains a unique BNL. By \lref{4} this is then also
the case for any $r\in U\cap\pw$, i.e. $\Gamma$ is continuous in $U$,
which contradicts \lref{nc}.

We now show that $\#\Gamma(p)\ge 2$. So assume that $\Gamma(p)$ has a
unique element, i.e. $\Gamma_{W_i}(p)=\Gamma_{W_j}(p)$ for all $i\ne j$.
Then \lref{constant} implies that the same is true for any
$q\in\pw_i\cap\pw_j\cap S$ and hence by the above for any $q\in\pw\cap
S$. Thus $\Gamma$ is continuous in $U_p$, which again contradicts
\lref{nc}. The second inequality follows from \lref{only1}.
\qed

\begin{lem}\label{locconv}
There exists $\delta=\delta(p)>0$ such that $W_i\cap B_\delta(p)$ is
convex for all $i$.
\end{lem}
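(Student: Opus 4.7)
The plan is to show that, for $\delta$ sufficiently small, the set $W_i\cap B_\delta(p)$ lies on one side of a ``supporting'' totally geodesic hypersurface through the unique BNL $\mu_i=\Gamma_{W_i}(p)$ provided by Lemma~\ref{only1}, and in fact equals the corresponding half of the convex ball. Since a geodesically convex ball intersected with one side of a totally geodesic hypersurface is still convex, this yields the claim.

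First I would fix the surface $S$ and the neighborhood $U_p$ from Lemma~\ref{surface}, inside which each $W_i\cap S$ is connected. Lemma~\ref{noc} produces another BNL $\mu_j\ne\mu_i$ at $p$, so $\theta:=\angle(\mu_i,\mu_j)>0$. By the identity $\pw\cap S=\bigcup_{k\ne l}(\pw_k\cap\pw_l\cap S)$ established inside the proof of Lemma~\ref{noc}, every boundary point of $W_i$ in $S$ near $p$ belongs to some other $\pw_j$, and by Lemma~\ref{constant} the two unique BNLs from the $W_i$- and $W_j$-sides at such a point make exactly the angle $\theta$. This rigid angular datum plays the role of a ``tangent cone'' of the otherwise irregular set $\pw_i$ at $p$.

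To prove convexity, I would argue by contradiction: assume $q_1,q_2\in W_i\cap B_\delta(p)$ are connected by a minimal geodesic $\gamma\subset B_\delta(p)$ that exits $W_i$ at a first point $q'\in\pw_i$ and re-enters at $q''\in\pw_i$. Since the nullity leaves are complete and totally geodesic by Proposition~\ref{derham}, a geodesic starting in $W_i$ tangent to $\Gamma$ stays in its leaf and never exits; hence the $\Gamma^\perp$-component of $\gamma'$ at $q'$ is nonzero. Working in the local product structure on $W_i$ guaranteed by Proposition~\ref{derham}, I would project the relevant portion of $\gamma$ into $S$, obtaining a curve exiting $W_i\cap S$ at $\bar q'$ and re-entering at $\bar q''$. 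Applying Lemma~\ref{integral} to a $2$-disc in $S\cap B_\delta(p)$ bounded by this projected curve together with two radial geodesics from $p$, the holonomy of $\Gamma$ around this loop is bounded by $(n-1)\sup|\Scal_M|\cdot\mathrm{Area}(S)$, which tends to $0$ with $\delta$ because $\Scal_M(p)=0$. For $\delta$ sufficiently small this upper bound drops below $\theta/4$, while Lemma~\ref{constant} forces the supporting BNL direction from $W_i$'s side to rotate by at least $\theta$ between $q'$ and $q''$, producing the desired contradiction and hence $\gamma\subset W_i$.

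The principal obstacle is the complete absence of regularity of $\pw_i$: it need not be a smooth hypersurface, a Jordan curve on $S$, nor even locally arcwise connected at $p$, and there may be no continuous arc inside $W_i$ ending at $p$. Consequently no classical tangent-cone or second-fundamental-form argument can be invoked, and every step must be phrased in terms of the two intrinsic data available: the strictly positive angular gap $\theta$ (uniform on $\pw_i\cap\pw_j\cap U_p$ by Lemma~\ref{constant}), and the vanishing of curvature at $p$ (which makes the holonomy bound of Lemma~\ref{integral} beat $\theta$ once $\delta$ is small). The delicate part of the proof is to extract a sharp enough geometric incompatibility from this competition despite the pathological behaviour of $\pw_i$, and to carry out the projection to $S$ coherently across the pieces of $W_i$ that a single geodesic $\gamma$ may sweep through.
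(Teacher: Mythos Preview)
Your plan hinges on a competition that does not exist. You claim Lemma~\ref{constant} forces the BNL $\Gamma_{W_i}$ to rotate by at least $\theta$ between the exit point $q'$ and the re-entry point $q''$, but Lemma~\ref{constant} only fixes the angle between $\Gamma_{W_i}(q)$ and $\Gamma_{W_j}(q)$ at a \emph{common} boundary point $q\in\pw_i\cap\pw_j$; it says nothing about the variation of $\Gamma_{W_i}$ between two distinct boundary points of $W_i$. In fact the very holonomy bound you invoke shows the opposite: $\Gamma_{W_i}(q')$ and $\Gamma_{W_i}(q'')$ are both close to $\Gamma_{W_i}(p)$ by parallel transport along short loops in a region of small curvature, hence close to each other. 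The bare fact that a geodesic exits and re-enters $W_i$ imposes no lower bound on the rotation of $\Gamma_{W_i}$ along the way, so no contradiction is produced. Your opening claim --- that $W_i\cap B_\delta(p)$ actually \emph{equals} a half of the ball cut by a totally geodesic hypersurface --- is likewise much stronger than convexity and is essentially the endpoint of Theorem~\ref{mainfinite}; it cannot be assumed here. A secondary issue is the projection of $\gamma$ to $S$ via the product structure of $W_i$: since $\gamma$ leaves $W_i$, this projection is only defined on disjoint arcs and does not yield the closed curve you need for Lemma~\ref{integral}.

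The paper's argument is entirely different and makes no use of $\theta$. One connects $q$ to $q'$ by a curve $\alpha\subset W_i\cap B_{\e'}(p)$ (via Lemma~\ref{e}) and sweeps the chords $\sigma_t=\overline{q\,\alpha(t)}$; let $s$ be the first parameter at which $\sigma_s$ meets $\pw_i$, say at $x$. For $t<s$ the ruled set $H_t=\bigcup_r\Gamma(\sigma_t(r))\subset W$ is a flat totally geodesic hypersurface, with limit $H$ as $t\to s$. Any BNL $\mu\in\Gamma(x)$ transversal to $H$ would be crossed by nearby $H_t\subset W$, contradicting $\mu\subset\pw$; hence $\mu\subset H$. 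But $H$ is foliated by the parallel translates of $\Gamma(q)$ along $\sigma_s$, and a complete flat $(n{-}2)$-leaf inside this flat $(n{-}1)$-manifold through a point of $\sigma_s$ must be one of those translates. Thus every $\mu\in\Gamma(x)$ is the same leaf, giving $\#\Gamma(x)=1$ and contradicting Lemma~\ref{noc}. The mechanism you are missing is this sweep-out barrier: at the \emph{first} touching point the approaching ruled hypersurfaces trap \emph{all} BNLs into a single parallel family, which is what forces uniqueness without any appeal to the angular gap between components.
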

\proof
Let ${\e'}>0$ such that $B_{\e'}(p)\subset U_p\subset B_{\e}(p)$, and
let $\delta=\delta({\e'},p)$ as in \lref{e}.
Take points $q,q'\subset W_i\cap B_{\delta}(p)$ for which the
minimizing geodesic segment $\overline{q q'}$ is not contained in
$W_i\cap B_{\delta}(p)$. Take a curve
$\alpha\subset W_i\cap B_{{\e'}}(p)$ joining $q$ with $q'$, and set
$s:= \sup\{r:\sigma_t\subset W_i,\,\forall 0\leq t<r\}$,
where $\sigma_t=\overline{q\alpha(t)}$.
Since $\overline{q\alpha(s)}\subset \overline{W_i}\cap B_{{\e'}}(p)$,
we have that $\overline{q\alpha(s)}\cap\pw_i\not=\emptyset$.
We claim that $m(x)=1$ for all $x\in \overline{q\alpha(s)}\cap\pw_i$,
which contradicts the first inequality in \lref{noc}.

To prove the claim, take $\mu\in\Gamma(x)$ and consider for each $0<t<s$
the flat totally geodesic completely ruled hypersurface
$H_t:=\cup_{0<r<1} \Gamma(\sigma_t(r))\subset W$ with limit
$H:=\lim_{t\to s} H_t$. If $H$ intersects $\mu$ transversally, $H_t$
would also for $t$ close to $s$, which is a contradiction since
$\mu\subset\pw$. Therefore, $T_x\mu$ is a hyperplane contained in $T_xH$.
Since $H$ is foliated by complete flat hypersurfaces parallel to
$\Gamma(q)$ along $\sigma_s$ and $\mu\subset H$ is also a complete flat
hypersurface, it follows that $\mu$ is parallel to $\Gamma(q)$ along
$\sigma_s$ as well. Thus $\mu$ is unique and hence $m(x)=1$.
\qed
\vspace{1.5ex}

We now come to the main result about the local structure of $\pw$.

\begin{lem}\label{int1}
The set $F_{ij}:=\partial W_i\cap\partial W_j\subset\partial W$ is convex
for all $i,j$, and along every geodesic in $F_{ij}$ the two families of
BNL's induced by $W_i$ and $W_j$ are parallel.
\end{lem}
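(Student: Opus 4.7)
The plan is to prove the two assertions separately, both via approximation by leaves and polygonal paths inside $W_i$ and $W_j$, combined with \lref{locconv}, \lref{only1}, and the holonomy estimate of \lref{integral}.

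\textbf{Convexity of $F_{ij}$.} Given $q,q'\in F_{ij}$, let $\gamma:[0,1]\to M^n$ be a minimizing geodesic from $q$ to $q'$. I first cover $\gamma$ by finitely many balls $B_{\delta(x_k)}(x_k)$, $x_k\in\gamma$, provided by \lref{locconv}, so that each $W_i\cap B_{\delta(x_k)}(x_k)$ is geodesically convex. For each $k$ I choose $y_k^\nu\in W_i$ with $y_k^\nu\to x_k$ as $\nu\to\infty$, which is possible since $x_k\in\partial W_i$. For $\nu$ large the geodesic segment from $y_k^\nu$ to $y_{k+1}^\nu$ lies in $W_i\cap B_{\delta(x_k)}(x_k)$ by local convexity, and the concatenated polygonal path $\beta^\nu\subseteq W_i$ converges to $\gamma$ in Hausdorff distance. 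Hence $\gamma\subseteq\overline{W_i}$, and symmetrically $\gamma\subseteq\overline{W_j}$. Since $W_i,W_j$ are disjoint open sets, no point of $W_i$ lies in $\overline{W_j}$ and vice versa, so $\overline{W_i}\cap\overline{W_j}=\partial W_i\cap\partial W_j=F_{ij}$, yielding $\gamma\subseteq F_{ij}$.

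\textbf{Parallelism of the BNL families.} Let $\gamma:[0,L]\to F_{ij}$ be a geodesic and write $\mu_i(t)\in\Gamma_{W_i}(\gamma(t))$ for the unique BNL from $W_i$, well-defined by \lref{only1}. I want to show that parallel transport in $M^n$ along $\gamma$ sends $\mu_i(0)$ to $\mu_i(L)$ as tangent subspaces; the same argument applied to $W_j$ handles $\mu_j$. Pick $y_\nu\in W_i$ with $y_\nu\to\gamma(0)$, $\Gamma(y_\nu)\to\mu_i(0)$ and $z_\nu\in W_i$ with $z_\nu\to\gamma(L)$, $\Gamma(z_\nu)\to\mu_i(L)$. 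Using the polygonal construction from the convexity step, I choose $\alpha_\nu\subseteq W_i$ from $y_\nu$ to $z_\nu$ contained in an $\varepsilon_\nu$-tube of $\gamma$ with $\varepsilon_\nu\to 0$. Since $\Gamma$ is parallel in $W_i$, parallel transport along $\alpha_\nu$ carries $\Gamma(y_\nu)$ to $\Gamma(z_\nu)$. Concatenating $\alpha_\nu$ with the short minimizing geodesics from $\gamma(0)$ to $y_\nu$ and from $z_\nu$ to $\gamma(L)$, and with $\gamma$, I obtain a closed loop which bounds (via geodesic interpolation between $\alpha_\nu$ and $\gamma$) a 2-disk $D_\nu$ with $\mathrm{Area}(D_\nu)\to 0$. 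By \lref{integral} the discrepancy between the ambient parallel transport around this loop and the identity is bounded by a constant times $\sup_{D_\nu}|R|\cdot\mathrm{Area}(D_\nu)$; since $V\subseteq W$ implies $\partial W$ consists of flat points and $\gamma\subseteq\partial W$, the curvature is small in a tube around $\gamma$, so this quantity vanishes in the limit. Therefore parallel transport of $\mu_i(0)$ along $\gamma$ equals $\mu_i(L)$.

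\textbf{Main obstacle.} The key technical hurdle is constructing $\alpha_\nu\subseteq W_i$ inside an arbitrarily thin tube around $\gamma$: the boundary $\partial W_i$ can be very wild (not a Jordan curve and not even locally arc-connected at some points, as remarked after \lref{surface}), so one cannot use curves in $\partial W_i$ or assume much about the local structure of $W_i$ near $\gamma$. The construction must rely solely on \lref{locconv}, the local finiteness of $W$, and \lref{e} to piece together short geodesic arcs of $W_i$ near each $x_k\in\gamma$. A secondary subtlety is calibrating $\varepsilon_\nu\to 0$ against $\sup_{D_\nu}|R|\to 0$ so that the holonomy bound actually closes up; this works because curvature and distance to $\gamma$ can be driven to zero simultaneously, using continuity of $R$ and the flatness of $\partial W$.
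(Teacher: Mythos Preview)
Your strategy---approximate points of $F_{ij}$ from within $W_i$ and $W_j$ and pass to the limit---is exactly the paper's, but you over-engineer both halves and introduce a genuine gap in the convexity step.

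Recall that the $W_i$ are the \emph{local} connected components at the fixed point $p$, all sitting inside $U_p$; since $U_p$ may be taken arbitrarily small, one simply works with $U_p\subset B_{\delta(p)}(p)$ from \lref{locconv}, so that each $W_i$ is already convex. Then for $q,q'\in F_{ij}$ pick sequences $q_n,q_n'\in W_k$ (for $k=i,j$) with $q_n\to q$, $q_n'\to q'$; the \emph{single} segment $\overline{q_nq_n'}$ lies in $W_k$ by convexity and converges to $\overline{qq'}$, whence $\overline{qq'}\subset\overline{W_i}\cap\overline{W_j}=F_{ij}$. Your chain of balls $B_{\delta(x_k)}(x_k)$ is not only unnecessary, it is circular: choosing $y_k^\nu\in W_i$ with $y_k^\nu\to x_k$ at an intermediate $x_k\in\gamma$ presupposes $x_k\in\overline{W_i}$, which is precisely what you are proving; and \lref{locconv} applied at $x_k$ concerns the local components \emph{at $x_k$}, not $W_i$, so the assertion that $W_i\cap B_{\delta(x_k)}(x_k)$ is convex does not follow from that lemma.

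For parallelism the paper again uses the single approximating segment: $\Gamma$ is parallel along $\overline{q_nq_n'}\subset W_k$, parallel transport depends continuously on the curve, and $\Gamma(q_n)\to\Gamma_{W_k}(q)$, $\Gamma(q_n')\to\Gamma_{W_k}(q')$ (unique limits by \lref{only1}); hence parallel transport along $\overline{qq'}$ carries $\Gamma_{W_k}(q)$ to $\Gamma_{W_k}(q')$. Your detour through \lref{integral} and a spanning disk is valid in principle but redundant once the approximating curves are geodesic segments converging to $\gamma$.
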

\proof
Take two points $q,r\in F_{ij}$ and, for $k=i,j$, sequences
$q_{k,n},r_{k,n}\in W_k$ such that $q_{k,n}\to q$, $r_{k,n}\to r$. By
convexity,
$\overline{q_{k,n}r_{k,n}}\subset W_k$ and since both converge to
$\overline{qr}$, it follows that $\overline{qr}\subset F_{ij}$.
For the second assertion, simply observe that the parallel transport
along $\overline{qr}$ of the BNL's agrees with the limits of the
parallel transport along $\overline{q_{k,n}r_{k,n}}$.
\qed
\vspace{1.5ex}

We are finally in a position to prove \tref{m2}, which follows
from \tref{m1} and the following.

\begin{theorem}\label{mainfinite}
Let $M^n$ be a complete Riemannian manifold with a parallel rank $n-2$
distribution defined in a dense, locally finite and maximal open set $W$,
whose leaves are complete and flat. Then $M^n\setminus W$ is a disjoint
union of complete flat totally geodesic embedded hypersurfaces.
If, in addition, $M^n$ has finite volume, then these hypersurfaces are
compact and $M^n$ is a \ggm\!\!.
\end{theorem}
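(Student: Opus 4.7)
The plan is to use the preceding lemmas to produce at each $p\in\pw$ a local flat totally geodesic hypersurface through $p$ contained in $\pw$, glue these into globally defined disjoint embedded hypersurfaces $H_\lambda$, apply \pref{derham} to the connected components of $W$ to identify them with twisted cylinders, and then deduce from the finite volume hypothesis that the $H_\lambda$ are compact.

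\emph{Local construction.} Fix $p\in\pw$. By \lref{noc} choose $i\ne j$ with distinct BNLs $\mu_i=\Gamma_{W_i}(p)$ and $\mu_j=\Gamma_{W_j}(p)$. Since $F_{ij}$ is geodesically convex (\lref{int1}) and contains $p$, the $(n-1)$-dimensional subspace $V_p:=T_p\mu_i+T_p\mu_j\subset T_pM$ sits in its tangent cone. Pick a geodesic $\sigma\subset F_{ij}$ starting at $p$ in a direction of $T_p\mu_j$ transverse to $T_p\mu_i$. By the parallelism clause of \lref{int1}, along $\sigma$ the family of BNLs induced by $W_i$ is parallel; hence at each $q\in\sigma$ the parallel translate of $\mu_i(p)$ is the tangent space of a BNL $\mu_i^q$ which is a complete flat totally geodesic $(n-2)$-submanifold. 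The union $H:=\bigcup_{q\in\sigma}\mu_i^q$ is then a complete flat totally geodesic $(n-1)$-submanifold through $p$ lying inside $F_{ij}\subset\pw$.

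\emph{Globalization and compactness.} Any other pair $(k,\ell)$ with $p\in F_{k\ell}$ must have its BNLs contained in $T_pH$, for otherwise the local construction repeated with this pair would give a distinct flat totally geodesic hypersurface $H'$ through $p$, and \lref{4} would allow a continuous extension of $\Gamma$ past $H$ or $H'$, contradicting \lref{nc} and the maximality of $W$. Thus locally $\pw$ coincides with the single hypersurface $H$; gluing these local pieces along common BNLs yields globally defined embedded complete hypersurfaces $H_\lambda$, pairwise disjoint by the same maximality argument, with $M^n\setminus W=\bigsqcup_\lambda H_\lambda$. For compactness, apply \pref{derham} to identify each connected component of $W$ with a twisted cylinder $C_i=(\Sigma_i\times\R^{n-2})/G_i$; a hypersurface $H_\lambda$ adjacent to $C_i$ corresponds to a boundary geodesic $\gamma$ of $\Sigma_i$ quotiented by the stabilizer $G_i^\gamma$ of $\gamma\times\R^{n-2}$, and if $G_i^\gamma$ failed to act cocompactly on $\gamma\times\R^{n-2}$, a one-sided collar of $H_\lambda$ in $C_i$ would carry infinite volume. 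Hence each $H_\lambda$ is compact, local finiteness of $\{H_\lambda\}$ is inherited from $W$, and the decomposition $M^n=W\sqcup\bigsqcup_\lambda H_\lambda$ exhibits $M^n$ as a \ggm.

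The principal obstacle is the local construction: extracting a genuine smooth $(n-1)$-dimensional flat totally geodesic submanifold from the a priori merely convex set $F_{ij}$. Convexity alone does not pin down the dimension, and the flatness and total geodesicness of the output must be obtained by sweeping $\mu_i$ by parallel transport along a geodesic in $\mu_j$ transverse to $\mu_i$ via the parallelism clause of \lref{int1}. The argument crucially exploits the existence of two distinct limit directions $\mu_i\ne\mu_j$, an assumption whose necessity is highlighted by the ``drunken cylinder'' Example~4 in \sref{examples}, where a single family of limit nullity lines breaks both the convexity and the local finiteness properties needed for the construction.
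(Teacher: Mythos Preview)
Your local construction has a genuine gap that your closing paragraph flags but does not close. You assert that a geodesic $\sigma\subset F_{ij}$ leaves $p$ in a direction of $T_p\mu_j$ transverse to $T_p\mu_i$, and that the swept set $H=\bigcup_{q\in\sigma}\mu_i^q$ lies in $F_{ij}$. Neither follows from convexity of $F_{ij}$: there is no reason for the tangent cone of $F_{ij}$ at $p$ to contain $T_p\mu_i+T_p\mu_j$ (one has $\mu_i\subset\partial W_i$, but not $\mu_i\subset\partial W_j$ a priori), and even granting $\sigma\subset F_{ij}$, the BNLs $\mu_i^q$ lie in $\partial W_i$ but need not lie in $\partial W_j$. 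The paper circumvents this by building, from the BNLs through the \emph{relative interior} $F_{ij}^\circ$, ruled flat totally geodesic hypersurfaces $S_i\subset\partial W_i$ and $S_j\subset\partial W_j$ (as limits of ruled hypersurfaces $H_n\subset W_r$), and then carrying out a boundary analysis at points $q\in S_i\cap\partial S_j$: any further local component $W'_s$ at $q$ must have its BNL tangent to $S_i$ (else leaves of $\Gamma$ in $W_i$ and $W'_s$ would cross), so all such BNLs coincide and $\#\Gamma(q)=2$; then \lref{constant} and \lref{nc} force $m(q)=2$, whence $S_j$ extends smoothly past $q$ and $F_{ij}$ is itself the complete hypersurface. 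This extension step, not convexity, is what produces the hypersurface and simultaneously yields $m\equiv 2$ on $\partial W$; your globalization sketch via \lref{4} and \lref{nc} does not substitute for it.

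Your compactness argument is also incorrect as stated. You claim that noncompactness of $H_\lambda$ forces a one-sided collar in $C_i$ to have infinite volume, but example~$(c)$ in \sref{examples} exhibits precisely a finite-volume twisted cylinder with noncompact flat totally geodesic boundary; as remarked there, compactness is only guaranteed when \emph{two} cylinders are glued nontrivially along $H_\lambda$. The paper accordingly splits on the stabilizer $G'$ of $\gamma\times\R^{n-2}$: if $G'$ acts nontrivially on $\gamma$, a translation element produces a uniform $\e$-tube $[0,\e)\times H$ with metric $C^\infty$-close to a product (since the curvature vanishes to infinite order along $\gamma$), so $\vol(H)<\infty$ and, being flat, $H$ is compact; if $G'$ acts trivially on $\gamma$, then $H=\gamma\times F^{n-2}$ with $F^{n-2}$ compact, and one must invoke the second adjacent cylinder, whose distinct BNLs on $H$ complete the argument.
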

\proof
Consider $F_{ij}$ as in \lref{int1} with
$\Gamma_{W_i}(p)\neq\Gamma_{W_j}(p)$ which exists by \lref{noc}. Then for
$r=i,j$, each point in the interior $F_{ij}^\circ$ of $F_{ij}$ is
contained in a unique complete BNL
of $W_r$, and we denote by $S_r\subset\pw_r$ the union of such BNL's with
$S_i\cap S_j\supset F_{ij}^\circ$. Observe in addition that $S_r$ is a
smooth flat totally geodesic hypersurface, and completely ruled since, as
seen in the proof of \lref{int1}, it arises as a limit of
$H_n:=\cup_{0<t<\e} \Gamma(\sigma_n(t))\subset W_r$ for a sequence of
geodesic segments $\sigma_n\subset W_r$.

We now study the local connected components based at a point $q\in
S_i\cap\partial S_j\subset\partial F_{ij}^\circ$. Clearly, $W_i$ is one
of those, with $S_i$ smooth at $q$ and $W_i$ lying (locally) on one side
of~$S_i$. Let $W'_1$, $W'_2$ be any two other local components at $q$
with BNL's $\mu_s\in \Gamma_{W'_s}(q)$ for $s=1,2$. Observe first that
$\mu_s$ cannot be transversal to $S_i$ since otherwise leaves of $\Gamma$
in $W_i$ and~$W_s'$ would intersect. Hence $\mu_1$ and $\mu_2$ are
tangent to $S_i$, which implies that $\mu_1=\mu_2$. Indeed, otherwise
$\mu_1$ and $\mu_2$ would intersect transversally in $S_i$ by dimension
reasons, and then near~$q$ the leaves of $W'_1$ and $W'_2$ would again
intersect since they are both locally on the same side of the
hypersurface $S_i$. Therefore, all \lccs at~$q$, apart from $W_i$, share
the same BNL and thus $\#\Gamma(q)=2$. Using a surface $S$
at~$q$ and $U_q\subset S$ as in \lref{surface}, we see that $W_i\cap S$
is a half disc with boundary a smooth geodesic containing~$q$ in its
interior. For all remaining local connected components $W_s'$ at~$q$,
\lref{int1} implies that the intersections $\pw_1'\cap\pw_2'\cap S$ are
geodesics with endpoints at $q$. Since
$\Gamma_{W_1'}(q)=\Gamma_{W_2'}(q)$, \lref{constant} implies that
$\Gamma_{W_1'}(r)=\Gamma_{W_2'}(r)$ for all $r\in \pw_1'\cap\pw_2'\cap S$
and hence by \lref{4} also in a neighborhood of $S$. Thus by \lref{nc}
there can be only one such component, i.e. $m(q)=2$, and hence $W_j$ is
the second component at $q$. But then $S_j$ extends past~$q$ and hence
$F_{ij}$ is a complete flat hypersurface containing $p$ in its interior.
We conclude that the number of \lccs at $\pw$ is 2 everywhere and $\pw$
is a disjoint union of complete flat totally geodesic embedded
hypersurfaces.

\vspace{0.5ex}

In order to prove the last assertion of the theorem, let $C^n=\cyl$
be one of the twisted cylinders with finite volume.
A component of its boundary has the form $H=(\gamma\times\R^{n-2})/G'$,
where $\gamma\subset\su$ is a complete boundary geodesic and
$G'\subset G$ the normal subgroup that preserves $H$. We first assume
that $G'$ acts nontrivially on $\gamma$. By taking a two-fold cover of
$C^n$ if necessary, we can assume that there are no elements of $G'$
which act as a reflection on $\gamma$, and hence $G'$ contains an element
which acts by translation. This implies that there exists a uniform $\e$
tubular neighborhood $B_\e(\gamma)\subset\su$ of the infinite geodesic
$\gamma$. On $B_\e(\gamma)$ the metric is $C^\infty$-close to the product
metric on $[0,\e)\times \gamma$ since the curvature of $\su$ vanishes to
infinite order along $\gamma$. Hence, the $\e$-tubular neighborhood of
$H$ in $C^n$ is given by
$[0,\e)\times(\gamma\times\R^{n-2})/G'=[0,\e)\times H$, with a metric
also $C^\infty$-close to a product metric. Since $C^n$ has finite volume,
so does $H$. But one easily sees that a flat manifold of finite volume is
compact.

On the other hand, if $G'$ acts trivially on $\gamma$, then
$H=\gamma\times F^{n-2}$ with BNL $F^{n-2} = \R^{n-2}/G'$. If
$B$ is a small ball near $\gamma$ such that its translates under $G'$ are
disjoint from $B$, then the projection of $ B\times \R^{n-2}$ to $C^n$ is
isometric to $B\times F^{n-2}$. Thus $F^{n-2}$ again has finite volume
and is hence compact. This implies the last assertion of the theorem
since $H$ is the boundary of another finite volume twisted cylinder that
induces different BNL's on $H$.
\qed

\vspace{1.5ex}

\begin{rem*}
One of the difficulties in proving Conjecture 1 in the Introduction is
that one needs to exclude the following situation when local finiteness
fails. Let $W'$ be a concave local connected component at $p$ with $\pw'$
consisting of two smooth hypersurfaces meeting along their common
boundary BNL at $p$. Then one needs to show that the complement of $W'$
near~$p$ cannot be densely filled with infinitely many disjoint twisted
cylinders whose diameters go to $0$ as they approach $p$.
\end{rem*}


\begin{thebibliography}{9999l} 

\bibitem[AM]{am} A. Aazami and C. Melby-Thompson,
{\it On the principal Ricci curvatures of a Riemannian 3-manifold}.
Preprint 2005, arxive:1508.02667v2.

\bibitem[BS]{sb} R. Bettiol and B. Schmidt,
{\it Three-manifolds with many flat planes},
Preprint 2014, arXiv:1407.4165.

\bibitem[BKV]{BKV} E. Boeckx, O. Kowalski and L. Vanhecke,
{\it Riemannian manifolds of conullity two}.
World Scientific, 1996.

\bibitem[Ca]{ca} E. Cartan,
{\it La d\'eformation des hypersurfaces
dans l'espace euclidien r\'eel a $n$ dimensions}.
Bull. Soc. Math. France {\bf 44} (1916), 65--99.

\bibitem[CG]{cg}
J. Cheeger and M. Gomov, {\it Collapsing Riemannian manifolds while keeping
their curvature bound I}, J. Dif. Geom. {\bf 23} (1986), 309-364.

\bibitem[DF]{df} M. Dajczer and L. Florit,
{\it Genuine rigidity of Euclidean submanifolds in codimension two}.
Geom. Dedicata {\bf 106} (2004), 195--210.

\bibitem[DFT]{dft} M. Dajczer, L. Florit and R. Tojeiro,
{\it On deformable hypersurfaces in space forms}.
Ann. Mat. Pura ed Appl. {\bf 147} (1998), 361--390.

\bibitem[FF]{ff} L. Florit and G. Freitas,
{\it Classification of codimension two deformations
of rank two Riemannian manifolds}.
To appear in Comm. Anal. Geom.

\bibitem[FZ1]{fz} L. Florit and W. Ziller,
{\it Nonnegatively curved Euclidean submanifolds in codimension two}.
Comm. Math. Helv. {\bf 91} (2016), no. 4, 629--651.

\bibitem[FZ2]{fz1} L. Florit and W. Ziller,
{\it Geometric graph manifolds with nonnegative scalar curvature}.
Preprint, 2017. ArXiV:
\href{https://arxiv.org/abs/1705.04208}{1705.04208}.

\bibitem[Gr]{g} M. Gromov,
{\it Manifolds of negative curvature}.
J. Differ. Geom. {\bf 13} (1978), 223--230.

\bibitem[Ma]{ma} R. Maltz,
{\it The nullity spaces of curvature-like tensors}.
J. Diff. Geom. {\bf 7} (1972), 519--523.

\bibitem[No]{no} K. Nomizu,
{\it On hypersurfaces satisfying a certain condition on the curvature
tensor}.
Tohoku Math. J. {\bf 20} (1968), 46--59.

\bibitem[PR]{pr} R. Ponge and H. Reckziegel,
{\it Twisted products in pseudo-Riemannian geometry}.
Geom. Dedicata {\bf 48} (1993), no. 1, 15--25.

\bibitem[SW]{sw} B. Schmidt and J. Wolfson,
{\it Three manifolds with constant vector curvature}.
Indiana Univ. Math. J. {\bf 63} (2014), 1757--1783.

\bibitem[Sb]{sbr} V. Sbrana, {\it Sulla variet\'a ad $n-1$ dimensioni
deformabili nello spazio euclideo ad $n$ dimensioni}.
Rend. Circ. Mat. Palermo {\bf 27} (1909), 1--45.

\bibitem[Sch]{sc} V. Schroeder,
{\it Rigidity of Nonpositively Curved Graphmanifolds}.
Math. Ann. {\bf 274} (1986), 19--26.

\bibitem[Se1]{se1} K. Sekigawa,
{\it On the Riemannian manifolds of the form $B \times_f F$}.
Kodai Math. Sem. Rep. {\bf 26} (1974), 343--347.

\bibitem[Se2]{se2} K. Sekigawa,
{\it On some 3-dimensional complete Riemannian manifolds satisfying $R(X,Y)\cdot R=0$}.
Tohoku Math. J. {\bf 27} (1975), 561--568.

\bibitem[Se3]{se3} K. Sekigawa,
{\it On some 4-dimensional Riemannian manifolds satisfying $R(X,Y)\cdot R=0$}.
Hokkaido Math. J. {\bf 6} (1977), 216--229.

\bibitem[Si]{si} I. Singer,
{\it Infinitesimally homogeneous spaces}.
Comm. Pure Appl. Math. {\bf 13} (1960) 685--697.

\bibitem[Sz]{sz} Z.I. Szab\'o,
{\it Structure theorems on Riemannian spaces satisfying
$R(X,Y)\cdot R=0$, II, Global versions}.
Geom. Dedicata {\bf 19} (1985), 65--108.

\bibitem[Wa]{w} F. Waldhausen,
{\it Eine Klasse von 3-dimensionalen Mannigfaltigkeiten II}.
Invent. Math. {\bf 4} (1967), 87--117.

\end{thebibliography}
\end{document}